\theoremstyle{plain}
	\newtheorem{lemma}{Lemma}
	\newtheorem{proposition}{Proposition}
	\newtheorem{theorem}{Theorem}
\theoremstyle{definition}
	\newtheorem{definition}{Definition}
\theoremstyle{remark}
	\newtheorem{remark}{Remark}
	\newtheorem{example}{Example}
\newcommand{\fn}{\mathfrak{n}}
\newcommand{\fa}{\mathfrak{a}}
\newcommand{\fh}{\mathfrak{h}}
\newcommand{\fd}{\mathfrak{d}}
\newcommand{\fg}{\mathfrak{g}}
\newcommand{\fb}{\mathfrak{b}}
\newcommand{\fgl}{\mathfrak{gl}}
\newcommand{\fsl}{\mathfrak{sl}}
\newcommand{\fs}{\mathfrak{s}}
\newcommand{\fr}{\mathfrak{r}}
\newcommand{\fso}{\mathfrak{so}}
\newcommand{\fsu}{\mathfrak{su}}
\newcommand{\ku}{\mathbb{K}}
\newcommand\blank{{\mkern 2mu\cdot\mkern 2mu}}
\DeclareMathOperator{\ad}{ad}
\DeclareMathOperator{\der}{Der}
\DeclareMathOperator{\determ}{det}
\DeclareMathOperator{\spa}{span} 
\DeclareMathOperator{\im}{Im} 
\DeclareMathOperator{\inner}{Inner}
\DeclareMathOperator{\linear}{End}
\DeclareMathOperator{\tr}{tr}
\title{Metrics related to Oscillator algebras}
\author{Pilar Benito and Jorge Rold\'an-L\'opez}
\date{December 23, 2022}
\begin{document}
\maketitle

\begin{abstract}
    A Lie algebra is said to be metric if it admits a symmetric invariant and nondegenerate bilinear form. The harmonic oscillator algebra, which arises in the quantum mechanical description of a harmonic oscillator, is the smallest solvable nonabelian metric example. This algebra is the first step of a countable series of solvable Lie algebras which support invariant Lorentzian forms.
    Generalizing this situation, in this paper we arrive to the oscillator Lie $\ku$-algebras as double extensions of metric spaces. The aim of this paper is to present some structural features, invariant metrics and derivations of this class of algebras and to explore their possibilities of being extended to mixed metric Lie algebras.
\end{abstract}

\noindent\textbf {Keywords:} Lie algebra, oscillator algebra, invariant form, metric, double extension \medskip

\noindent\textbf {MSC classification:} 17B05, 15A63, 17B40
    
\section{Introduction}

Real oscillator algebras are the Lie algebras attached to connected, simply connected and non-simple Lie groups that admit a Lorentz invariant metric that makes them indecomposable (see~\cite[Theorem 4.1]{medina1985groupes}). This type of algebras were introduced in \cite{hilgert1985lorentzian} as double extensions of Hilbert spaces, and renamed as standard solvable \emph{Lorentzian Lie algebras $A_{2m+2}$} in~\cite[Definition II.3.16]{hilgert1989lie}. Also, they are the class of real solvable non-abelian Lie algebras that carry an invariant inner product of metric signature $(2m+1,1)$. This class is integrated by real Lie algebras of dimension $2m+2$ for $m\geq 1$. The term oscillator comes from quantum mechanics because these algebras describe a system of harmonic oscillator $m$-dimensional euclidean space. The $4$-dimensional standard Lorentzian algebra $A_4$ (in this paper denoted by  $\fd_4(\mathbb{R})$) is the \emph{harmonic oscillator algebra}, and it is the Lie algebra of the harmonic oscillator group (see \cite[Example V.4.15]{hilgert1989lie} and \cite{douglas2007class}). Oscillator algebras also support other nonassociative structures such as Poisson and Leibniz algebras and symmetric Leibniz bialgebras following \cite{camacho2019leibniz} and \cite{albuquerque2021poisson}.

A \emph{Lie algebra} $\fg$ is a vector space over a field $\mathbb{K}$ endowed with a binary skew-symmetric ($\frac{1}{2}\in \mathbb{K}$) bilinear product $[x,y]$ satisfying the \emph{Jacobi identity}:
\begin{equation}\label{eq:Jacobi}
J(x,y,z)=[[x,y],z]+[[z,x],y]+[[y,z],x]=0\quad \forall\, x,y,z\in \fg.
\end{equation}
In case $[x,y]=0$ for every $x,y\in \fg$, the Lie algebra $\fg$ is called \emph{abelian} and identity~\eqref{eq:Jacobi} becomes trivial. Easy-to-follow examples of Lie algebras are the set of linear maps over a finite-dimensional vector space $V$ (equivalently the set of $n\times n$ matrices) under the commutator bracket $[f,g]=fg-gf$ ($[A,B]=AB-BA$ in a matrix level). This algebra is called \emph{general Lie algebra} and will be denote along this paper as $\fgl(V)$ or $\fgl_n(\ku)$ in its matrix form. Any subalgebra of $\fgl(V)$ is named a \emph{linear Lie algebra}.

The global structure of invariant Lie groups is encoded in the algebraic structure of their (real) metric Lie algebras. In 1985, Medina (see~\cite[Lemma 2.1 and Corollary 2.2]{medina1985groupes}) provides the following equivalent conditions on the existence of bi-invariant metrics on Lie groups:

\begin{quote}
   \emph{ For a given Lie group $G$ and its Lie algebra $Lie(G)=\fg$, the following statements are equivalents,
    \begin{itemize}
        \item [(a)] The group $G$ is endowed with a bi-invariant metric.
        \item [(b)] The algebra $\fg$ has a metric such that the adjoint action of $G$ on $\fg$ is given by isometries.
        \item [(c)] The adjoint and coadjoint representations of $\fg$ are isomorphic by means of an isomorphism $\psi: g \to g^*$ that satisfies $\psi(a)(b) = \psi(b)(a)$.
    \end{itemize}
    Moreover, if $G$ is a connected group, (a) is also equivalent to,
    \begin{itemize}
        \item [(d)] The algebra $\fg$ has a quadratic form $q\colon \fg \to \mathbb{R}$ and for every $x\in \fg$, the linear transformations $\ad x$ is skew-adjoint with respect to the bilinear $\varphi$ form attached to $q$, i.e. $\varphi(x, y)=q(x+y)-q(x)-q(y)$.
    \end{itemize}
    }
\end{quote}

\noindent These conditions have been previously established by Milnor in 1976 (see~\cite[Lemmas 7.1 and 7.2]{milnor1976curvatures}). Since $\ad x(y)=[x,y]$ is the left product on $\fg$, condition (d) is equivalent to
\begin{equation}\label{eq:invariant-metric}
    \varphi([x,y],z)+\varphi( y,[x,z])=0,
\end{equation}
for all $x,y,z\in \fg$.
Any arbitrary Lie algebra $\fg$ admitting a nondegenerate symmetric bilinear form $\varphi$ that satisfies equation~\eqref{eq:invariant-metric} is named \emph{metric Lie algebra}, and $\varphi$ is called \emph{invariant symmetric form} on $\fg$. This notion appears for the first time in \cite{Tsou_Walker_1957} under the name of \emph{metrisable algebra}. These algebras are also known as quadratic, orthogonal, metric or metrised (these terms usually used over the reals) or self-dual. Along this paper, taking into account that oscillator algebras arise over the real field and following \cite{bordemann1997nondegenerate}, any arbitrary Lie algebra $\fg$ admitting a nondegenerate invariant bilinear form $\varphi$ will be named \emph{pseudo-metric} and if in addition $\varphi$ is symmetric, we will say that $\fg$ is a \emph{metric algebra}. In \cite[Lemma 1]{hilgert1996orthogonal} it is proved that for real Lie algebras pseudo-metric and metric notions are equivalent. Metric Lie algebras that decompose as the orthogonal sum of two ideals are called \emph{decomposable}. Otherwise, they are called \emph{indecomposable}.

A simple Lie algebra is a non-abelian Lie algebra without proper ideals different form zero. Simple Lie algebras are the brick blocks that define the class of semisimple Lie algebras. As a main example of a simple Lie algebra, we point out $\fsl(V)$ or $\fsl_{m+1}(\ku)$ in matrix form, the set of traceless linear maps of $V$ or traceless matrices. The algebra $\fsl(V)$ is a subalgebra of $\fgl(V)$, called \emph{special linear algebra} due to its connection with the special linear Lie group $SL(V)$ of endomorphisms of determinant $1$.

Semisimple Lie algebras under the Killing form, $\kappa(x,y)=\tr(\ad x\ad y)$, are pretty examples of metric algebras. In the opposite structural side, we find abelian Lie algebras, all of them are metric by using any non-degenerate symmetric form. In \cite{medina1985algebres} it is shown that every metric Lie algebra which is neither simple nor one-dimensional is a double extension of a metric Lie algebra $(\fg, \varphi)$ by a simple Lie algebra or a one-dimensional Lie algebra.

 Oscillator algebras are metric and solvable Lie algebras with nilradical a Lie algebra of Heisenberg type according to \cite[Proposition II.3.11]{hilgert1989lie}. Also, they are local algebras (only one maximal ideal), and, therefore, they are indecomposable as metric Lie algebras. The main goal of this paper is to generalize the notion of oscillator algebras to arbitrary fields of characteristic zero and, as metric algebras they are, to study their double extensions to other mixed metric Lie algebras. In Section 2 we introduced general oscillator $\ku$-algebras in Definition \ref{def:general-oscillator} and we check in Proposition \ref{prop:pattern-oscillator} that they preserve the structural basic properties that they have as $\mathbb{R}$-algebras. Our Proposition \ref{prop:inv-sym-mdim} expand to arbitrary fields Lemma 1 in \cite{hilgert1996orthogonal}, and Proposition \ref{prop:der-oscillator} relates derivations of oscillator algebras to derivations of Heisenberg algebras. Section 3 is devoted to compute the vector space of invariant forms of this type of algebras (Lemma 1) and to give a explicit matrix description of the whole set of derivations and skew-derivations of the class of some basic oscillator $\mathbb{R}$-algebras (Theorem 2). The description points out that, for $m\geq 2$ the algebra of skew-derivations is a mixed Lie algebra with Levi subalgebra the special unitary real Lie algebra $\fsu_m(\mathbb{R})$. So, from oscillator algebras we can get mixed metric Lie algebras. Double extensions of the harmonic oscillator $\fd_4(\mathbb{R})$ only produces decomposable metric solvable Lie algebras. But this little algebra, also known as \emph{diamond} algebra, has a self-interest of its own (see \cite{douglas2007class} and \cite{casati2010indecomposable} and references therein).

\section{Metric spaces and Oscillator algebras}

In the sequel, $\fg$ will denote a Lie algebra with product $[x,y]$ over a field $\ku$ of characteristic zero and $\varphi: \fg \times \fg \to \ku$ an invariant and non-degenerate bilinear form. So $[x,y]=-[y,x]$ and equations  \eqref{eq:Jacobi} and \eqref{eq:invariant-metric} are fulfilled. Any self-linear map of $\fg$ that satisfies $d[x,y]=[d(x), y]+[x,d(y)]$ for all $x,y\in \fg$ is called \emph{derivation}. Left products $\ad x=[x,\cdot]$ are examples of derivations which are named \emph{inner derivations}. A linear map $d$ is said \emph{skew} with respect to $\varphi$ or $\varphi$-skew if
\begin{equation}\label{skew-der}
    \varphi(d(x), y)+\varphi(x,d(y))=0 \quad \text{ for all } x,y \in \fg.
\end{equation}
We will denote $\der \fg$, $\inner \fg$ and $\der_\varphi \fg$ the sets of derivations, inner derivations and $\varphi$-skew derivations of the metric Lie algebra $(\fg, \varphi)$. Those sets are subalgebras of the general linear algebra $\fgl(\fg)$. For a vector space $V$, the set of $1$-forms $\beta\colon V\to \ku$, will be denote by $V^*$ (\emph{dual space} of $V$). The derived series (lower central series) of $\fg$ is defined recursively as $\fg^{(1)}=\fg$ (respectively $\fg^{1}=\fg$) and $\fg^{(n+1)}=[\fg^{(n)},\fg^{(n)}]$ (respectively $\fg^{n+1}=[\fg^n,\fg^n]$). A Lie algebra in which the derived (lower central) series reaches zero is named solvable (nilpotent). Semisimple Lie algebras are direct sum as ideals of simple ones. According to Levi Theorem, any Lie algebra decomposes as a direct sum (as subalgebras) of its (unique) maximal solvable ideal, $\mathfrak{r}(\fg)$ and a semisimple subalgebra $\fs$ called \emph{Levi subalgebra}. So $\fg=\fs\oplus \mathfrak{r}(\fg)$ and, along this paper, $\fg$ is said \emph{mixed Lie algebra} if both summands are non zero. The maximal nilpotent ideal of $\fg$ will be denote as $\fn(\fg)$ and named \emph{nilradical} of $\fg$.

According to \cite{medina1985algebres}, from a metric Lie algebra and through out the double extension process, we can build new metric Lie algebras. Here we use a more general presentation of this method that can be found in \cite{bordemann1997nondegenerate}.

\begin{theorem}\label{thm:doubleExtension}
Let $(\fg, \varphi)$ be a finite-dimensional metric Lie algebra over a field $\mathbb{K}$. Let $\fb$ be another finite-dimensional Lie algebra over $\mathbb{K}$ and suppose there is a Lie homomorphism $\phi\colon \fb \to \der_\varphi(\fg)$. Denote by $w\colon \fg\times \fg \to \fb^*$ the bilinear skew-symmetric map $(a, a') \mapsto (b \mapsto \varphi(\phi(b)(a),a'))$. Take the vector space direct sum $\fg_\fb:= \fb \oplus \fg \oplus \fb^*$ and define the following multiplication for $b$, $b' \in \fb$, $a$, $a'\in \fg$, and $\beta, \beta'\in \fb^*$:
	\begin{multline}\label{eq:de-bracket}
		[b + a + \beta, b' + a' + \beta'] :=  [b,b']_\fb + \phi(b)(a') - \phi(b')(a) + [a,a']_\fg \\+ w(a, a') + \beta'\circ \ad b  - \beta\circ \ad b'.
	\end{multline}
	Moreover, define the following symmetric bilinear form $\varphi_\fb$ on $\fg_\fb$:
	\begin{equation}\label{eq:de-bilinear}
		\varphi_\fb(b + a + \beta, b' + a' + \beta') := \beta(b') + \beta'(b) + \varphi(a, a').
	\end{equation}
Then the pair $(\fg_\fb, \varphi_\fb)$ is a metric Lie algebra over $\mathbb{K}$ and is called the double extension of $(\fg, \varphi)$ by $(\fb, \phi)$. \hfill $\square$
\end{theorem}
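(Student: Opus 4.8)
The plan is to verify directly that the bracket~\eqref{eq:de-bracket} is skew-symmetric and satisfies the Jacobi identity, and then that $\varphi_\fb$ in~\eqref{eq:de-bilinear} is symmetric, nondegenerate and invariant. Skew-symmetry of the bracket is immediate once one notes that $w$ is skew-symmetric; the term $w(a,a')$ is skew because $\phi(b)$ is a $\varphi$-skew derivation, so $\varphi(\phi(b)(a),a')=-\varphi(a,\phi(b)(a'))$. Symmetry and nondegeneracy of $\varphi_\fb$ are clear from the block form: on the pairing $\fb\times\fb^*$ it is the canonical dual pairing (hence nondegenerate there) and on $\fg$ it restricts to $\varphi$ (nondegenerate by hypothesis), while $\fb$ and $\fb^*$ are each totally isotropic.

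Next I would check invariance~\eqref{eq:invariant-metric} for $\varphi_\fb$. Writing $x=b+a+\beta$ and similarly for $y,z$, one expands $\varphi_\fb([x,y],z)$ using~\eqref{eq:de-bracket}: the $\fb$-component of $[x,y]$ pairs against the $\fb^*$-component of $z$, the $\fb^*$-component of $[x,y]$ pairs against the $\fb$-component of $z$, and the $\fg$-component pairs via $\varphi$. Grouping terms, one uses three facts: invariance of $\varphi$ on $\fg$, the defining relation of $w$ (which converts a $\varphi$-pairing with $\phi(b)$ into evaluation of an element of $\fb^*$), and the identity $(\beta\circ\ad b)(b')=-\beta([b',b])=\beta([b,b'])$ matched against $\beta\circ\ad b'$ on the other side. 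The cross terms between the $\phi(b)(a')$ pieces and the $w(a,a')$ pieces are exactly what cancels, and this is where the hypothesis that $\phi$ lands in $\der_\varphi(\fg)$ rather than merely $\der(\fg)$ is essential.

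Finally, the Jacobi identity $J(x,y,z)=0$ for the new bracket is the main obstacle: one must expand $J(b+a+\beta,\,b'+a'+\beta',\,b''+a''+\beta'')$ and collect its components in $\fb$, in $\fg$, and in $\fb^*$ separately. The $\fb$-component reduces to the Jacobi identity in $\fb$. The $\fg$-component splits into the Jacobi identity in $\fg$, plus the statement that $\phi$ is a Lie homomorphism into $\der(\fg)$ (so the $\phi(b)$'s act compatibly with $[\cdot,\cdot]_\fg$ and with each other via $\phi([b,b'])=[\phi(b),\phi(b')]$). The $\fb^*$-component is the most delicate: it mixes the coadjoint action $\beta\mapsto\beta\circ\ad b$, the map $w$, and $\phi$; making it vanish requires the invariance-type identity $\varphi(\phi(b)(a),a')=\varphi(\phi(b)(a'),a)\cdot(-1)$ together with the cocycle-like compatibility of $w$ with $\phi$ coming from $\phi$ being a homomorphism. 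I would organize this by treating the eight types of triple inputs (choosing each argument from $\fb$, $\fg$, or $\fb^*$) and showing vanishing type by type, since most mixed types collapse quickly and only two or three require the full force of the hypotheses. Since this is precisely the content of the cited construction in~\cite{bordemann1997nondegenerate}, I would either reproduce that computation or simply refer to it, noting that nothing in it depends on $\ku=\mathbb{R}$.
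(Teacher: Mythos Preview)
The paper does not give its own proof of this theorem: the statement is presented with a $\square$ and is simply attributed to~\cite{bordemann1997nondegenerate} (see the sentence preceding the theorem). Your proposal to verify skew-symmetry, Jacobi, and invariance directly is correct and is precisely the computation carried out in that reference; since you already note that one may ``simply refer to it,'' your approach and the paper's treatment coincide.
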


\begin{example} [One-dimensional double extension]

    Starting with the $\ku$-vector space $V_2=\spa \langle x,y \rangle$ and the bilinear symmetric and nondegenerate form $\varphi(x,x)=\varphi(y,y)=1$ and $\varphi(x,y)=0$, the set $\linear_\varphi (V_2)$ of linear maps that satisfy expression~\eqref{skew-der} is a one-dimensional subspace generated by the linear isomorphism $\delta(x)=y$ and $\delta(y)=-x$. Then, the Lie algebra obtained as double extension of the abelian metric algebra $(V_2,\varphi)$ by $\fb=\ku\cdot \delta=\linear_\varphi (V_2)=\der_\varphi V_2$ is the $4$-dimensional Lie algebra $\fd_4(\ku)=\ku\cdot \delta \oplus V_2\oplus \ku\cdot \delta^*$. Where the Lie bracket given in expression~\eqref{eq:de-bracket} turns into (in this case $\beta\circ \ad b=0$ and $w(a,a')=\varphi(\delta(a),a')\delta^*$):
    \begin{multline}\label{eq:bracket-oscillator}
    [t\delta + \lambda_1 x+ \lambda_2 y+ s\delta^*,t'\delta + \mu_1 x+ \mu_2 y+ s'\delta^*] := \\ (t\mu_1-t'\lambda_1)y - (t\mu_2-t'\lambda_2)x + (\lambda_1\mu_2- \lambda_2\mu_1)\delta^*,
	\end{multline}
for $t,t',s,s' \in \ku$ (for basic generators, $[\delta,x]=y, [\delta,y]=-x$ and $[x,y]=\delta^*$). The bilinear form extends $\varphi$ to $\varphi_{\fb}$ by declaring the orthogonal decomposition $\fd_4(\ku)=\spa \langle \delta,\delta^* \rangle \perp V_2$ where $\delta$ and $\delta^*$ are isotropic vectors and $\varphi_\fb(\delta,\delta^*)=1$. So $W=\spa \langle \delta,\delta^* \rangle$ is an hyperbolic subspace. In this way we arrive to a $4$-dimensional algebra which is the smallest nonabelian solvable metric Lie $\ku$-algebra.  
The real algebra $\fd_4(\mathbb{R})$ is the harmonic oscillator algebra, with metric signature $(3,1)$. This algebra is also known as diamond or Nappi-Witten Lie algebra (see \cite{casati2010indecomposable} and references therein) and can be obtained as the central extension of the Poincaré Lie algebra in two dimensions.
\end{example}
\begin{definition}\label{def:general-oscillator}
    Let $(V_n, \varphi)$ be a $n$-dimensional $\ku$-vector space endowed with a symmetric and non-degenerate form $\varphi$. Consider now any skew-linear automorphism $\delta$ of $V_n$; it is only possible for $n=2m$. The double extension $\fd(V_{2m}, \varphi, \delta)=\ku \cdot \delta\oplus V_{2m}\oplus \ku \cdot \delta^*$ where $\delta ^*$ is the dual $1$-form of $\delta$ is a metric Lie algebra that we will call generalized oscillator $\ku$-algebra on the triple $(V_{2m},\varphi, \delta)$ (to shorten $\fd_{2m+2}(\ku)$). 
\end{definition}

\begin{remark}
    The algebras $\fd_{2m+2}(\mathbb{R})$ are the real oscillator algebras introduced firstly in \cite[Proposition 2.2]{hilgert1985lorentzian} as Lorentzian semialgebras of Class II (see also \cite[Proposition II.3.11]{hilgert1989lie}). In \cite[Definition II.7]{neeb1993invariant} and \cite{hilgert1996orthogonal}  they appear as remarkable examples of Lie algebras with cone potencial. The study of other nonassociative structures on oscillator algebras in \cite[Section 5]{albuquerque2021poisson} yields to some geometric information on connections and metrics on oscillator Lie groups. 
\end{remark}

A representation of $\fg$ is a homomorphism of Lie algebras $\rho\colon \fg \to \fgl(V)$, so $\rho([x,y])=\rho(x)\rho(y)-\rho(y)\rho(x)$. Here the vector space $V$ is called $\fg$-module. As main examples of modules we have the adjoint $\rho=\ad$ and coadjoint $\rho = \ad^*$ representations. The first one provides $V=\fg$ as $\fg$-module ($\ad x (y)=[x, y]$) and the last one the dual module $V^*=\fg^*$ (here $\ad^* x(\beta)=-\beta\circ \ad x$). The vector space of $\fg$-module homomorphisms, denoted as $Hom_\fg (\fg, \fg^*)$, consists on the linear maps $f\colon \fg \to \fg^*$ such that
\begin{equation}\label{eq:map-module}
    f([x,y])=-f(y)\circ \ad x \text{ for all  }x,y\in \fg.
\end{equation}
The algebra $\fg$ is called \emph{self-dual} if the adjoint and coadjoint representations are isomorphic (there is a bijective $\fg$-module homomorphism from $\fg$ to $\fg^*$).

We come back now to items (c) and (d) of the multiple characterisation result of Lie groups with bi-invariant metrics and their Lie algebras (see~\cite{medina1985groupes}) that we have mentioned in the introductory section. Both items highlight the natural relationship between invariant forms on Lie algebras and homomorphisms of their adjoint and coadjoint representations. As vector spaces, the set of invariant bilinear forms, $B_{inv}(\fg)$ and $Hom_\fg (\fg, \fg^*)$, are isomorphic:
\begin{equation}\label{eq:isom-inv-isom-Lmod}
    \Delta\colon B_{inv}(\fg) \to Hom_\fg (\fg, \fg^*), \quad \Delta(b)(x)=b(x,\blank)=\psi_b(x),
\end{equation}
 and $\Delta^{-1}(\psi)=b_\psi$, $b_\psi(x,y)= \psi(x)(y)$. Even more, $\Delta$ sends a non-degenerate invariant form into a $\fg$-module isomorphism and conversely. 

On the other hand, for any bilinear form $\varphi$ of $\fg$, we can set the bilinear form $\varphi^t(x,y):=\varphi(y,x)$. From the anticommutativity of $\fg$ it is easily checked that $\varphi^t$ is invariant if and only if $\varphi$ so is. In this way, over fields of characteristic not 2, the usual decomposition of $\varphi$ as sum of its symmetric part $\varphi_s$ and its skew-symmetric part $\varphi_{as}$
\begin{equation*}
     \varphi=\frac{1}{2}(\varphi+\varphi^t)+\frac{1}{2}(\varphi-\varphi^t)
\end{equation*}
preserves the invariance, and, therefore, $B_{inv}(\fg)$ decomposes as the direct sum of the vector spaces of symmetric invariant forms, $B_{inv}^s$ and that of the skew-symmetric forms $B_{inv}^{as}$ :
\begin{equation*}
    B_{inv}(\fg)=B_{inv}^s(\fg)\oplus B_{inv}^{as}(\fg).
\end{equation*}
So for invarint forms, at a matrix level, we recover the natural decomposition of a matrix as sum of a symmetric matrix and a skew-symmetric matrix.

\begin{definition}
    Let $\fg$ a Lie $\ku$-algebra, the metric dimension $m(\fg)$ of $\fg$ is the dimension of the vector space $B_{inv}(\fg)$. 
\end{definition}

Our next result restates and expands Lemma~1 in \cite{hilgert1996orthogonal} and it clarifies the equivalent assertions for Lie algebras attached to Lie gropus with bi-invariant metrics given by Medina and Milnor.

\begin{proposition}\label{prop:inv-sym-mdim}
    Let $\fg$ be a Lie algebra over a field of characteristic different from $2$. Then it is equivalent:
    \begin{itemize}
        \item [(a)] There exists a nondegenerate form $\varphi\in B_{inv}(\fg)$.
        \item [(b)] There exists a nondegenerate form $\varphi\in B_{inv}^s(\fg)$. 
        \item [(c)] The adjoint and coadjoint representations of $\fg$ are isomorphic.
    \end{itemize}
    Moreover, over fields that have at least $\dim \fg +1$ elements, the vector space $B_{inv}(\fg)$ is linearly generated by the set of invariant and non-degenerate symmetric bilinear forms and previous assertions are also equivalent to:
    \begin{itemize}
        \item [(d)] $m(\fg)$ is greater or equal than one.
    \end{itemize}
\end{proposition}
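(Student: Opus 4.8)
The plan is to establish the cycle (b) $\Rightarrow$ (a) $\Rightarrow$ (c) $\Rightarrow$ (b), then treat the spanning statement and the consequent equivalence with (d) separately. The implication (b) $\Rightarrow$ (a) is immediate since a symmetric nondegenerate invariant form is in particular a nondegenerate invariant form. For (a) $\Rightarrow$ (c), I would use the isomorphism $\Delta\colon B_{inv}(\fg)\to Hom_\fg(\fg,\fg^*)$ from \eqref{eq:isom-inv-isom-Lmod}: by the sentence following that display, $\Delta$ carries a nondegenerate invariant form to a $\fg$-module isomorphism $\fg\to\fg^*$, which is exactly the statement that the adjoint and coadjoint representations are isomorphic. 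For (c) $\Rightarrow$ (b) I would start from an arbitrary $\fg$-module isomorphism $\psi\colon\fg\to\fg^*$, set $b_\psi=\Delta^{-1}(\psi)$, which is then a nondegenerate invariant form, and symmetrize: the form $\varphi_s=\frac12(b_\psi+b_\psi^t)$ is invariant (as recalled in the excerpt, transposition preserves invariance over characteristic $\neq 2$), symmetric by construction, but possibly degenerate. To fix degeneracy, I would consider the one-parameter family $\varphi_\lambda=b_\psi+\lambda\, b_\psi^t$ (equivalently $\psi+\lambda\psi^t$ at the module level), all of which are invariant, and argue that its radical is nontrivial only for finitely many $\lambda$.

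The key technical point is this finiteness claim, and it is where I expect the main obstacle to lie. Fixing a basis, $\varphi_\lambda$ has Gram matrix $A+\lambda A^{\mathsf T}$ where $A$ is the (invertible) Gram matrix of $b_\psi$; its determinant $\det(A+\lambda A^{\mathsf T})$ is a polynomial in $\lambda$ which is not identically zero (at $\lambda=0$ it equals $\det A\neq 0$), hence has at most $\dim\fg$ roots. Choosing $\lambda$ to be a nonzero element of $\ku$ that is not among these roots — possible precisely when $|\ku|>\dim\fg$, hence the hypothesis in the second half of the statement — yields a nondegenerate invariant form $\varphi_\lambda$; but to land in $B_{inv}^s$ I instead want the symmetrization. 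The cleanest route is: the polynomial $p(\lambda)=\det(A+\lambda A^{\mathsf T})$ has at most $\dim\fg$ roots, and since $p(1)=\det(A+A^{\mathsf T})=2^{\dim\fg}\det(\varphi_s)$ (up to the obvious normalization), if $\varphi_s$ were degenerate for the given $\psi$ one replaces $\psi$ by $\psi+\mu\psi^t$ for suitable $\mu$ first — or, more simply, one observes directly that $\lambda=1$ can be reached by rescaling once a single nondegenerate member of the family is found. I would present whichever of these bookkeeping variants is shortest; the mathematical content is just "a nonzero polynomial of degree $\le\dim\fg$ over a field with more than $\dim\fg$ elements has a non-root."

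For the final part, the spanning statement, I would argue that under the hypothesis $|\ku|\ge\dim\fg+1$ every invariant form is a $\ku$-linear combination of nondegenerate symmetric invariant ones. Given $\varphi\in B_{inv}(\fg)$, fix any nondegenerate symmetric invariant $\varphi_0$ (which exists by (b), assuming $B_{inv}(\fg)\neq 0$; if $B_{inv}(\fg)=0$ there is nothing to prove). The pencil $\varphi+t\varphi_0$ is invariant for all $t$, and $\det(\varphi+t\varphi_0)$, in a basis, is a polynomial in $t$ that is nonzero (it is $\det(\varphi_0)\,t^{\dim\fg}+\cdots\neq 0$), so it has at most $\dim\fg$ roots; picking two distinct values $t_1,t_2\in\ku$ avoiding these roots — possible since $|\ku|>\dim\fg$ — gives two nondegenerate invariant forms $\varphi+t_i\varphi_0$, and $\varphi=\tfrac{t_2}{t_2-t_1}(\varphi+t_1\varphi_0)-\tfrac{t_1}{t_2-t_1}(\varphi+t_2\varphi_0)$ expresses $\varphi$ as a combination of two nondegenerate invariant forms; symmetrizing each and noting the skew parts must cancel (since $\varphi$ itself need not be symmetric, one works inside $B_{inv}^s$ and $B_{inv}^{as}$ separately using the decomposition $B_{inv}=B_{inv}^s\oplus B_{inv}^{as}$ recalled in the excerpt) reduces everything to the symmetric summand, on which the same pencil argument applies verbatim. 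Finally (d) $\Leftrightarrow$ (b): if $m(\fg)\ge 1$ then $B_{inv}(\fg)\neq 0$, and by the spanning statement it contains a nondegenerate symmetric form, giving (b); conversely (b) trivially forces $m(\fg)\ge 1$.
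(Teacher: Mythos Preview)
Your cycle has a genuine gap at the step (c)~$\Rightarrow$~(b). You correctly obtain from $\psi$ a nondegenerate invariant form $b_\psi$, but your repair for the case when the symmetrization $\varphi_s=\tfrac12(b_\psi+b_\psi^t)$ is degenerate does not work. In the pencil $\varphi_\lambda=b_\psi+\lambda\,b_\psi^t$ the only symmetric member is $\varphi_1$ (unless $b_\psi$ is already symmetric), so producing a nondegenerate $\varphi_{\lambda_0}$ for some other $\lambda_0$ does not yield a symmetric nondegenerate form. Your suggested replacement $\psi\mapsto\psi+\mu\psi^t$ does not help either: the symmetrization of $b_\psi+\mu\,b_\psi^t$ equals $(1+\mu)\,\varphi_s$, a scalar multiple of the original symmetrization, hence still degenerate. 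And ``rescaling'' cannot move you to $\lambda=1$ for the same reason. So (a)~$\Rightarrow$~(b) (equivalently (c)~$\Rightarrow$~(b)) remains unproved in your scheme. A second, independent issue: even if the pencil argument worked, it needs $|\ku|\geq\dim\fg+1$, whereas the equivalence of (a), (b), (c) is asserted for \emph{all} fields of characteristic $\neq 2$.

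The paper handles this differently: it runs the cycle (a)~$\Rightarrow$~(b)~$\Rightarrow$~(c)~$\Rightarrow$~(a), where (b)~$\Rightarrow$~(c) and (c)~$\Rightarrow$~(a) are the easy steps via $\Delta$ and $\Delta^{-1}$ (your (a)~$\Rightarrow$~(c) and the inverse), and the hard implication (a)~$\Rightarrow$~(b) is deferred to \cite[Proposition~2.4]{bordemann1997nondegenerate}, whose argument is not a naive symmetrization and does not require a cardinality bound. For the spanning statement and the equivalence with (d), the paper likewise cites \cite[Lemma~2.1]{bajo1997lie} rather than arguing directly. Note also that your route to (d)~$\Rightarrow$~(b) is circular: your spanning argument begins by \emph{choosing} a nondegenerate symmetric invariant $\varphi_0$, which presupposes (b), and then you invoke that spanning to deduce (b) from (d). You would need an argument that extracts a nondegenerate symmetric invariant form starting only from $B_{inv}(\fg)\neq 0$, and your pencil trick cannot launch from a possibly degenerate $\varphi$.
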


\begin{proof}
    Let us assume (a) and, using \cite[Proposition 2.4]{bordemann1997nondegenerate}, (b) follows. From (b) we get (c) taking into account that the isomorphism $\Delta$ in expression~\eqref{eq:isom-inv-isom-Lmod} sends any non-degenerate invariant form $b$ into the isomorphism $\varphi_b\colon\fg\to\fg^*$, $\varphi_b(x)=b(x,y)$. This map is one-to-one because $b$ is nondegenerate and, since $\fg$ and $\fg^*$ are equidimensionals, it is bijective. Moreover as $b$ is invariant, for all $x,y \in \fg$:
    \begin{equation*}
        \varphi_b([x,y])=b([x,y], \cdot)=-b(y, \ad x(\cdot))=-b(y,\cdot)\circ \ad x=-\varphi_b(y) \circ \ad x.
    \end{equation*}
    This is just equation~\eqref{eq:map-module}, so $\varphi_b\in Hom_\fg(\fg,\fg^*)$ and adjoint and coadjoint representations are isomorphic. Finally, for any bijective map $\psi \in Hom_\fg (\fg, \fg^*)$, $b_\psi(x,y)=\psi(x)(y)$ is a bilinear form and $b_\psi(x,\fg)=0$ implies that $\psi(x)$ is a null map, so $x=0$ because $\psi$ is one-to-one. Thus $b_\psi$ is nondegenerate. Moreover  $b_\psi([x,y], z)=b_\psi(\ad x(y), z)=\psi([x,y])(z)$ and from equation~\eqref{eq:map-module}, $\psi([x,y])=-\psi(y)\circ \ad x$ and  $\psi([x,y])(z)=-\psi(y)([x,z])=-b_\psi(y, [x,z])$. Hence $b_\psi \in B_{inv}(\fg)$, and (a) follows. 
    According to Lemma 2.1 in \cite{bajo1997lie}, $B_{inv}(\fg)=\spa \langle B_{inv}^s(\fg)\rangle$ over the real field. The arguments in the proof of this lemma are also valid for fields that have at least $\dim \fg+1$ elements. And the equivalence of the four statements is then proved.
\end{proof}

We point out a final pattern on metric Lie algebras (see \cite[Corollary 1.4]{hofmann1986invariant}). Here $U^\perp$ is the orthogonal subspace of any subspace of a vector space $(V,\varphi)$ where $\varphi\colon V\times V\to \ku$ is a symmetric bilinear form.

\begin{proposition}\label{prop:ideals}
    In any Lie algebra $\fg$ endowed with an non-degenerate symmetric invariant form, the map $I\mapsto I^\perp$ is an involutive anti-automorphism of the lattice of ideals of $\fg$ that maps $\fg^2=[\fg,\fg]$ to the centre $Z(\fg)$ and, more generally, the descending central series to the ascending central series. This map also sends maximal ideals into minimal and vice-versa.
\end{proposition}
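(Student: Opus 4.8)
The plan is to reduce everything to one consequence of invariance: if $I$ is an ideal of $\fg$ then so is $I^{\perp}$. Indeed, for $i\in I$, $n\in I^{\perp}$ and $x\in\fg$, equation~\eqref{eq:invariant-metric} gives $\varphi([x,n],i)=-\varphi(n,[x,i])$, and $[x,i]\in I$ because $I$ is an ideal, so $\varphi([x,n],i)=0$; hence $[x,I^{\perp}]\subseteq I^{\perp}$. Once this is known, $I\mapsto I^{\perp}$ is a self-map of the lattice of ideals of $\fg$ (intersections and sums of ideals are ideals), and the remaining formal properties are pure linear algebra for a non-degenerate symmetric bilinear form on the finite-dimensional space $\fg$: $I\subseteq J$ forces $J^{\perp}\subseteq I^{\perp}$, one has $\dim I^{\perp}=\dim\fg-\dim I$ and $I\subseteq(I^{\perp})^{\perp}$, so a dimension count yields $(I^{\perp})^{\perp}=I$. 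Thus $I\mapsto I^{\perp}$ is an order-reversing bijection of the ideal lattice whose inverse (itself) is again order-reversing, hence a lattice anti-automorphism: $(I+J)^{\perp}=I^{\perp}\cap J^{\perp}$ (standard for any bilinear form) and $(I\cap J)^{\perp}=I^{\perp}+J^{\perp}$ (apply the previous identity to $I^{\perp},J^{\perp}$ and take $\perp$, using involutivity). Finally, non-degeneracy gives $\fg^{\perp}=0$ and $0^{\perp}=\fg$, so the top and bottom of the lattice are exchanged; an order-reversing bijection therefore sends maximal ideals to minimal ideals and vice versa.

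For the centre I would record the convenient reformulation of invariance $\varphi(x,[y,z])=\varphi([x,y],z)$, obtained from~\eqref{eq:invariant-metric} and skew-symmetry. With it, $x\in[\fg,\fg]^{\perp}$ iff $\varphi([x,y],z)=0$ for all $y,z\in\fg$, iff $[x,y]\in\fg^{\perp}=0$ for all $y$ (non-degeneracy), iff $x\in Z(\fg)$; so $[\fg,\fg]^{\perp}=Z(\fg)$.

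The statement about the two central series is the inductive refinement of this. Write $\fg^{1}=\fg$, $\fg^{k+1}=[\fg,\fg^{k}]$ for the descending central series, and $Z_{0}(\fg)=0$, $Z_{k+1}(\fg)=\{x\in\fg:[x,\fg]\subseteq Z_{k}(\fg)\}$ for the ascending one. I claim $(\fg^{k})^{\perp}=Z_{k-1}(\fg)$ for every $k\geq1$. The case $k=1$ is $\fg^{\perp}=0=Z_{0}(\fg)$. Assuming $(\fg^{k})^{\perp}=Z_{k-1}(\fg)$, for $x\in\fg$ one has $x\in(\fg^{k+1})^{\perp}=[\fg,\fg^{k}]^{\perp}$ iff $\varphi([x,y],z)=0$ for all $y\in\fg$, $z\in\fg^{k}$ (using the reformulated invariance), iff $[x,y]\in(\fg^{k})^{\perp}=Z_{k-1}(\fg)$ for every $y\in\fg$, iff $[x,\fg]\subseteq Z_{k-1}(\fg)$, iff $x\in Z_{k}(\fg)$. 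Hence $(\fg^{k+1})^{\perp}=Z_{k}(\fg)$, which closes the induction; the instance $k=1$ recovers $[\fg,\fg]^{\perp}=Z(\fg)$.

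There is no real obstacle here; the two points that need care are that involutivity of $I\mapsto I^{\perp}$—and hence the whole lattice anti-automorphism together with the maximal/minimal statement—uses finite-dimensionality of $\fg$ (in force throughout, since oscillator algebras are finite-dimensional), and that one must fix a single indexing convention for the two central series so that the degree shift in $(\fg^{k})^{\perp}=Z_{k-1}(\fg)$ is consistent.
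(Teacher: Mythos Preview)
Your proof is correct and complete. Note, however, that the paper does not actually supply its own proof of this proposition: the statement is introduced with ``We point out a final pattern on metric Lie algebras (see \cite[Corollary 1.4]{hofmann1986invariant})'' and is quoted without argument. So there is no paper proof to compare against; your write-up simply fills in what the paper leaves to the reference, and it does so along the expected lines---invariance makes $I\mapsto I^{\perp}$ a self-map of the ideal lattice, non-degeneracy plus finite dimension gives involutivity and the dimension formula, and the identification of the two central series follows by the induction you give using $\varphi(x,[y,z])=\varphi([x,y],z)$.

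One small remark on conventions: the paper's preliminary paragraph contains a typo in the recursive definition of the lower central series (it writes $\fg^{n+1}=[\fg^{n},\fg^{n}]$, which is the derived series). Your choice $\fg^{k+1}=[\fg,\fg^{k}]$ is the intended one and is what makes the induction match the ascending central series.
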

From Definition \ref{def:general-oscillator} and Theorem \ref{thm:doubleExtension}, the $\ku$-algebras $\fd(V_{2m},\varphi, \delta)$ are one-dimensional double extensions of a metric abelian algebra by non-singular self-derivations with bracket product (see expressions~\eqref{eq:de-bracket} and~\eqref{eq:bracket-oscillator}),
\begin{equation}\label{eq:oscillator-bracket}
    [t\delta+u+s\delta^*,t'\delta+v+s'\delta^*]_{\fd_{2m+2}}=t\delta(v)-t'\delta(u)+\varphi(\delta(u), v)\delta^*.
\end{equation}
The metric structure (there may be other) in $\fd(V,\varphi, \delta)$ is given by expression~\eqref{eq:de-bilinear}.

Following \cite{dixmier1996enveloping}, a $\ku$-algebra of \emph{Heisenberg type} is a Lie algebra $\fh$ whose center is one-dimensional and equal to its derived algebra. For such algebra, the Lie bracket, $[x,y]=b_z(x,y)z$ (here $z$ is the only central element, up to scalars), provides an alternanting bilinear form $b_z:\fh\times \fh \to \ku$ and $(\fh)^\perp=\fh^2=\ku\cdot z$. Note that $b_z$ is non-degenerate on any complement summand $V$ of the centre $Z(\fh)$ in $\fh$. Hence $V$ is a vector space of dimension even and $b_z|_{V\times V}$ has a canonical basis $\{u_1, \dots, u_m,u_{m+1}, \dots, u_{2m}\}$ such that $b_z(u_i,u_{m+i})=1=-b_z(u_{m+i},u_i)$. Then, Heisenberg algebras have odd dimension and, for any natural $m\geq 1$, there is a unique Lie algebra of Heisenberg type of dimension  $2m+1$ described by the \emph{standard basis} given in expression~\eqref{eq:estandar-hm}.
\begin{equation}\label{eq:estandar-hm}
    \{u_1,\dots, u_m,u_{m+1}, \dots u_{2m}, z\}, \text{ with nonzero brackets }
    [u_i,u_{m+i}]=z.
\end{equation}
The Heisenberg algebra of dimension $2m+1$ will be denoted as $\fh_{2m+1}$.

The next result condenses and expands the structural algebraic properties of the oscillator $\mathbb{R}$-algebras (Propositions II.3.11 and II.3.12 of \cite{hilgert1989lie}) to any field of characteristic zero.
\begin{proposition}\label{prop:pattern-oscillator}
    The generalized $\ku$-oscillator algebra $\fd_{2m+2}=\fd(V_{2m},\varphi, \delta)$ is a solvable metric algebra under the invariant bilinear form $\varphi_\delta$ described in equation~\eqref{eq:metric-oscillator}. The nilradical $\fn(\fd_{2m+2})=\fd_{2m+2}^2=V_{2m}\oplus \ku\cdot \delta^*$ is its only maximal ideal. In particular, $\fd_{2m+2}$ is a local and indecomposable metric algebra, and its centre,
    \begin{equation*}
    Z(\fd_{2m+2})=\fd_{2m+2}^{(2)}=\ku\cdot \delta^*=Z(\fn(\fd_{2m+2})),
    \end{equation*}
    is the orthogonal subspace of $\fn(\fd_{2m+2})$ and the only minimal ideal. Moreover $\fn(\fd_{2m+2})$ is a Lie algebra of Heisenberg type in which the product is completely determine by the automorphism $\delta$ through the formulas $[u,v]=\varphi(\delta(u), v)\delta^*$ and $[\delta,u]=\delta(u)$ for all $u,v\in V_{2m}$.
    \begin{equation}\label{eq:metric-oscillator}
        \varphi_\delta (t\delta+u+s\delta^*,t'\delta+v+s'\delta^*)=ts'+t's+\varphi(u,v).
    \end{equation}
\end{proposition}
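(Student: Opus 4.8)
The whole statement is a package of facts that can be read off the explicit bracket~\eqref{eq:oscillator-bracket}, which on the basic summands reads $[\delta,u]=\delta(u)$ and $[u,v]=\varphi(\delta(u),v)\delta^*$ for $u,v\in V_{2m}$, with $\delta^*$ central. The metric assertion is immediate: $\fd_{2m+2}=\fd(V_{2m},\varphi,\delta)$ is by construction the double extension of the abelian (hence metric) space $(V_{2m},\varphi)$ by the one-dimensional algebra $\ku\cdot\delta\subseteq\der_\varphi(V_{2m})$, so Theorem~\ref{thm:doubleExtension} applies and the associated form~\eqref{eq:de-bilinear} is exactly the $\varphi_\delta$ of~\eqref{eq:metric-oscillator} (using $\delta^*(\delta)=1$). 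Everything else is bookkeeping with this bracket; I will write $\fa:=V_{2m}\oplus\ku\cdot\delta^*$.

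For solvability and the nilradical I would compute both the derived and the lower central series at once. No bracket has a $\delta$-component, so $[\fd_{2m+2},\fd_{2m+2}]\subseteq\fa$, and equality holds because $\delta$ is onto $V_{2m}$ and $\varphi$ is non-degenerate on the nonzero space $V_{2m}$, whence $\varphi(\delta(V_{2m}),V_{2m})=\ku$; thus $\fd_{2m+2}^2=\fa$. Next $[\fa,\fa]=[V_{2m},V_{2m}]=\ku\cdot\delta^*$ and $[\fa,\ku\cdot\delta^*]=0$, so $\fa$ is $2$-step nilpotent, the derived series of $\fd_{2m+2}$ terminates, and $\fd_{2m+2}^{(2)}=\ku\cdot\delta^*$; meanwhile $[\fd_{2m+2},\fa]=\fa\neq0$ shows the lower central series stabilises, so $\fd_{2m+2}$ is solvable but not nilpotent. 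Since $\fa$ is a nilpotent ideal of codimension one and $\fd_{2m+2}$ is not nilpotent, the nilradical lies strictly between $\fa$ and $\fd_{2m+2}$ and hence equals $\fa$.

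To obtain locality I would show every ideal $I\not\subseteq\fa$ is all of $\fd_{2m+2}$: such an $I$ contains some $x=t\delta+u+s\delta^*$ with $t\neq0$, and then $[[x,v],v']=t\varphi(\delta^2 v,v')\delta^*\in I$ for $v,v'\in V_{2m}$; picking $v,v'$ with $\varphi(\delta^2 v,v')\neq0$ (possible since $\delta^2$ is invertible and $\varphi$ non-degenerate) forces $\delta^*\in I$, then $[x,v]=t\delta(v)+\varphi(\delta(u),v)\delta^*\in I$ gives $V_{2m}=\delta(V_{2m})\subseteq I$, and finally $x-u-s\delta^*=t\delta\in I$, so $I=\fd_{2m+2}$. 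Hence $\fa=\fn(\fd_{2m+2})$ is the unique maximal ideal; a decomposition $\fd_{2m+2}=I_1\perp I_2$ into nonzero ideals would put both $I_i$ inside $\fa$, so $\fd_{2m+2}=I_1+I_2\subseteq\fa$, a contradiction, giving indecomposability. For the centre, the identities $[t\delta+u+s\delta^*,\delta]=-\delta(u)$ and $[t\delta+u+s\delta^*,v]=t\delta(v)$ force $u=0$ and $t=0$, so $Z(\fd_{2m+2})=\ku\cdot\delta^*=\fd_{2m+2}^{(2)}$, and the same computation inside $\fa$ yields $Z(\fa)=\ku\cdot\delta^*$. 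Applying Proposition~\ref{prop:ideals} to $(\fd_{2m+2},\varphi_\delta)$, the map $\perp$ is an order-reversing involution of the ideal lattice sending $\fd_{2m+2}^2=\fa$ to $Z(\fd_{2m+2})$ and the unique maximal ideal to the unique minimal ideal; thus $\fa^\perp=\ku\cdot\delta^*$ is the orthogonal of $\fn(\fd_{2m+2})$ and is the only minimal ideal. Finally $\fa$ has $Z(\fa)=[\fa,\fa]=\ku\cdot\delta^*$ one-dimensional, so it is of Heisenberg type, its product being exactly the displayed formulas $[u,v]=\varphi(\delta(u),v)\delta^*$, $[\delta,u]=\delta(u)$, which manifestly depend only on $\delta$.

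The computations are all elementary; the only points needing a little care — and so the main obstacle, such as it is — are the repeated uses of the invertibility of $\delta$ (hence of $\delta^2$) together with the non-degeneracy of $\varphi$ on $V_{2m}$ and the standing hypothesis $m\geq1$, which are precisely what give $\varphi(\delta(V_{2m}),V_{2m})=\ku$ and drive the ideal argument, and keeping track of the indexing conventions so that $\fn(\fd_{2m+2})=\fd_{2m+2}^2$ and $Z(\fd_{2m+2})=\fd_{2m+2}^{(2)}$ come out as stated.
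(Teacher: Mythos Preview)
Your argument is correct and follows the same broad outline as the paper: compute $\fd_{2m+2}^2$ and the centre from the explicit bracket, verify solvability/non-nilpotency, identify the nilradical as the unique maximal ideal, and then invoke Proposition~\ref{prop:ideals} for the orthogonality and minimal-ideal claims. The one substantive difference is how uniqueness of the maximal ideal is obtained. The paper argues via the Jacobson radical: since $\fd_{2m+2}$ is solvable, $\mathcal{J}(\fd_{2m+2})=[\fd_{2m+2},\fr(\fd_{2m+2})]=\fd_{2m+2}^2=\fa$, and as $\mathcal{J}$ is the intersection of all maximal ideals while $\fa$ is itself a maximal ideal (codimension one), $\fa$ must be the only one. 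You instead give a hands-on proof that any ideal meeting $\ku\cdot\delta$ nontrivially is all of $\fd_{2m+2}$, via the chain $[[x,v],v']=t\,\varphi(\delta^2 v,v')\delta^*$. Both are short; the paper's route is conceptually cleaner but imports outside facts (Jacobson, Marshall, and \cite{benito2023equivalent}), whereas yours is entirely self-contained and makes transparent exactly where the invertibility of $\delta$ and the non-degeneracy of $\varphi$ are used. One small wording slip: when you say the nilradical ``lies strictly between $\fa$ and $\fd_{2m+2}$'' you of course mean $\fa\subseteq\fn(\fd_{2m+2})\subsetneq\fd_{2m+2}$, not strict on both sides.
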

\begin{proof}
    Since $\delta$ is a linear automorphism, following \cite[Lemma 2.8]{benito2023equivalent}), from expression \eqref{eq:oscillator-bracket} we get  $\fd_{2m+2}^2=\text{Im}\, \delta+\spa \langle \varphi(\delta(u),v)\delta^*:u,v\in V_{2m}\rangle=V_{2m}\oplus \ku \cdot \delta^*$ and $Z(\fd_{2m+2})=(Z(V_{2m})\cap \text{Ker }\delta)\oplus \ku \cdot \delta^*=\ku \cdot \delta^*$. Now $\fd_{2m+2}^{(3)}=[\fd_{2m+2}^{(2)},\fd_{2m+2}^{(2)}]=0$, so $\fd_{2m+2}$ is a solvable algebra, that is, $\fr(\fd_{2m+2})=\fd_{2m+2}$. And it is not  nilpotent because of $\fd_{2m+2}^3=\fd_{2m+2}^2$. Then its Jacobson radical, $\mathcal{J}(\fd_{2m+2})=[\fd_{2m+2}, \fr(\fd_{2m+2})]=\fd_{2m+2}^2$, is just its derived algebra (see \cite[Chapter III, Section 9]{jacobson1979lie} and \cite{marshall1967frattini}). As $\mathcal{J}(\fd_{2m+2})\subseteq \fn(\fd_{2m+2})\neq \fd_{2m+2}$ and it is the intersection of the whole set of maximal ideals, it is the only maximal ideal and $\mathcal{J}(\fd_{2m+2})=\fn(\fd_{2m+2})$. The statement on the Lie bracket of two elements of $V_{2m}$ follows from equation~\eqref{eq:oscillator-bracket}. To finish the proof, we use Proposition \ref{prop:ideals} and the definition of a generalised Heisenberg algebra.
\end{proof}

Following Proposition \ref{prop:pattern-oscillator}, $Z(\fd_{2m+2})=\ku \cdot \delta^*$, $\fn(\fd_{2m+2})=\fd_{2m+2}^2$ and
\begin{equation*}
    \fd_{2m+2}(V_{2m},\varphi, \delta)=\ku \cdot\delta\oplus V_{2m}\oplus \ku \cdot\delta^*=\ku \cdot\delta\oplus\underbrace{\fn(\fd_{2m+2})}_{\fh_{2m+1}}. 
\end{equation*}
 Setting  $d=\ad_{\fd_{2m+2}} \delta$, we have $d|_{V_{2m}}=\delta$ and $d(\delta^*)=0$. Then, the nilradical is $d$-invariant and any oscillator algebra can be viewed as the split extension of an algebra of Heisenberg type $\fh_{2m+1}$ by a map $d\in \der \fh_{2m+1}$ such that $\ker d=Z(\fd_{2m+2})=Z(\fh_{2m+1})$ and 
\begin{equation*}
    \fh_{2m+1}=\im d \oplus \ker d.
\end{equation*}

\begin{proposition}\label{prop:der-oscillator}
   Any oscillator $\ku$-algebra can be obtained as a split extension of a Lie algebra of Heisenberg type $\fh_{2m+1}$ and a map $d\in \der \fh_{2m+1}$ such that $\fh_{2m+1}=\im d \oplus \ker d$ where $\ker d=Z(\fh_{2m+1})$ and the invariant vector space $\im d $ is endowed with a symmetric and nondegenerate bilinear form $\varphi$ for which $d|_{\im d}$ is $\varphi$-skew. Moreover, a self-linear map $D$ of the oscillator $\ku$-algebra  $\fd(V_{2m}, \varphi, \delta)$ is a derivation if and only if:
    \begin{itemize}
        \item [(a)] $\fn(\fd(V_{2m}, \varphi, \delta))$ is $D$-invariant and $D|_{\fn(\fd(V_{2m}, \varphi, \delta))}\in\der \fh_{2m+1}$. 
        \item [(b)] $D(\delta^*)=\alpha \delta^*$ for some $\alpha\in \ku$. 
        \item [(c)] $D(\delta(a))=[D(\delta),a]+[\delta,D(a)]$ for all $a\in V_{2m}$.
    \end{itemize}
\end{proposition}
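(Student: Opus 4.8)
The plan is to prove the two assertions of Proposition~\ref{prop:der-oscillator} in sequence. The first --- that every oscillator algebra is a split extension of the prescribed shape --- is essentially a repackaging of the structural facts already assembled in Proposition~\ref{prop:pattern-oscillator} and the paragraph following it. Indeed, writing $\fd_{2m+2}=\ku\cdot\delta\oplus\fh_{2m+1}$ with $\fh_{2m+1}=\fn(\fd_{2m+2})$, the inner derivation $d=\ad_{\fd_{2m+2}}\delta$ restricts on $V_{2m}$ to the skew-automorphism $\delta$ and kills $\delta^*$; since $\delta$ is an automorphism of $V_{2m}$ we get $\im d=V_{2m}$ and $\ker d=\ku\cdot\delta^*=Z(\fh_{2m+1})$, whence $\fh_{2m+1}=\im d\oplus\ker d$. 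Restricting $d$ to $\fh_{2m+1}$ gives a derivation of the Heisenberg algebra (a restriction of an inner derivation of the ambient algebra to an ideal is a derivation of that ideal), and $d|_{\im d}=\delta$ is $\varphi$-skew by Definition~\ref{def:general-oscillator}. Conversely, given such data $(\fh_{2m+1},d,\varphi)$, the bracket formula~\eqref{eq:oscillator-bracket} on $\ku\cdot d\oplus\fh_{2m+1}$ reconstructs an oscillator algebra, so the split-extension description is faithful; I would state this briefly rather than re-deriving the Jacobi identity, citing Theorem~\ref{thm:doubleExtension}.

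For the characterisation of derivations, I would prove both implications directly from the bracket~\eqref{eq:oscillator-bracket}. For the ``only if'' direction: let $D\in\der\fd_{2m+2}$. Since $\fn=\fd_{2m+2}^2$ is a characteristic ideal (the derived subalgebra is preserved by any derivation), it is $D$-invariant, and $D|_\fn$ is automatically a derivation of $\fn\cong\fh_{2m+1}$, giving~(a). Likewise $Z(\fd_{2m+2})=\ku\cdot\delta^*$ is characteristic, so $D(\delta^*)=\alpha\delta^*$ for some $\alpha\in\ku$, giving~(b). Item~(c) is just the Leibniz rule $D[\delta,a]=[D\delta,a]+[\delta,Da]$ applied to $[\delta,a]=\delta(a)$ for $a\in V_{2m}$. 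For the ``if'' direction: assume (a)--(c) and verify $D[x,y]=[Dx,y]+[x,Dy]$ on a spanning set of pairs. Decomposing $\fd_{2m+2}=\ku\cdot\delta\oplus V_{2m}\oplus\ku\cdot\delta^*$ and using bilinearity, the nontrivial cases are: $(\delta,a)$ with $a\in V_{2m}$, which is exactly~(c); $(a,a')$ with $a,a'\in V_{2m}$, where $[a,a']=\varphi(\delta(a),a')\delta^*\in\fn$ and both sides are handled by (a) together with the fact that $D|_\fn$ is a derivation of the Heisenberg algebra --- one must check that the Heisenberg-derivation condition on $D|_\fn$, combined with knowing the full $D\delta$, reproduces the mixed terms; and the pairs involving $\delta^*$, which are trivial since $\delta^*$ is central and $D(\delta^*)=\alpha\delta^*$ is again central.

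The one genuinely delicate point is the pair $(\delta,a)$ interacting with the pair $(a,a')$: writing $D\delta=c\delta+w+e\delta^*$ with $c\in\ku$, $w\in V_{2m}$, $e\in\ku$, and $D|_{V_{2m}}=A$ (a linear map $V_{2m}\to\fh_{2m+1}$ whose $V_{2m}$-component we call $A_0$ and whose $\delta^*$-component is a linear functional), condition~(c) becomes $\delta(A_0 a)=A_0(\delta a)+c\,\delta(a)-[\delta,w]\text{-correction}$, i.e.\ a commutation relation between $\delta$ and $A_0$ twisted by $c$ and $\mathrm{ad}\,w$; while the derivation property of $D|_\fn$ on $(a,a')$ forces $\varphi(\delta(a),A_0 a')+\varphi(\delta(A_0 a),a')+\varphi([\delta,w]\text{-terms})=\alpha\,\varphi(\delta(a),a')$ together with the nilpotent-part condition from $w$. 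I expect the main obstacle to be bookkeeping: showing that conditions (a), (b), (c) as stated are \emph{jointly sufficient} --- that no further compatibility between the functional part of $A$, the vector $w$, and the scalars $c,e,\alpha$ is needed --- because the central component $\delta^*$ absorbs several would-be constraints once one uses that $b_z=\varphi(\delta(\blank),\blank)$ is the Heisenberg form and that $\delta$ is $\varphi$-skew. I would organise the verification by projecting each instance of the Leibniz identity onto the three summands $\ku\cdot\delta$, $V_{2m}$, $\ku\cdot\delta^*$ separately, and in each case point to whichever of (a), (b), (c) supplies the needed equality, so that the sufficiency becomes a finite checklist rather than a single tangled computation.
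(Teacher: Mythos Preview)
Your proposal is correct and follows the same line as the paper's proof: the first assertion is read off from the discussion preceding the proposition, necessity of (a)--(c) comes from the nilradical and centre being characteristic, and sufficiency is a case-by-case Leibniz check on the spanning pairs coming from $\fd_{2m+2}=\ku\cdot\delta\oplus V_{2m}\oplus\ku\cdot\delta^*$. The ``delicate point'' you anticipate does not materialise: for $a,a'\in V_{2m}\subset\fn$, the $D$-invariance of $\fn$ in (a) forces $Da,Da'\in\fn$, so both sides of the Leibniz identity live entirely in $\fn$ and are computed with the Heisenberg bracket --- $D\delta$ never enters that case, and no extra compatibility between $c,w,e,\alpha$ is needed.
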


\begin{proof}
    The first part follows from previous discussion. The second part is based on the fact that the nilradical and the centre of any Lie algebra are characteristic ideals, which means that they are invariant through derivations. So (a), (b) and (c) are necessary conditions if $D$ is a derivation. For the converse, it is straightforward to check that if $D$ is a self-linear map of the oscillator algebra satisfying (a), (b) and (c), $D$ also satisfies the identity $d([x,y])=[d(x),y]+[x,d(y)]$.
\end{proof}

    
\section{Double extensions of Oscillator $\mathbb{R}$-algebras}

    Following Theorem \ref{thm:doubleExtension}, the knowledge of the set $\der_\varphi \fd(V_{2m}, \varphi, \delta)$ allows to expand oscillator algebras to other metric algebras. The latter set is closely related to $\der \fh_{2m+1}$ which is well-known set easy to describe. Using Proposition \ref{prop:der-oscillator}, in this section we will give an explicit description of the whole sets of derivations and skew-derivations of real oscillator algebras. And we will also describe their invariant forms. The results let us obtain two countable series of mixed metric Lie algebras based on oscillator algebras.

    According to \cite{benito2013levi}, the $2$-graded decomposition $\fh_{2m+1}=V\oplus Z(\fh_{2m+1})$, where $V$ is an arbitrary $\ku$-complement, induces a natural grading on $\mathrm{End}\,(\fh_{2m+1})$ and lets us describe the derivations of $\fh_{2m+1}$ as (for a matrix description see \cite{rubin1993solvable}):
    \begin{multline*}
    \der \fh_{2m+1}=\underbrace{\{\delta: \delta\mid_V \in \mathfrak{sp}(V,b_z), \delta(Z(\fh_{2m+1})=0\}}_{\fs\, \cong\, \mathfrak{sp}_{2m}(\ku)}\oplus \\ \overbrace{\ku \cdot \widehat{id}\oplus \{\delta: \delta(V) \subseteq Z(\fh_{2m+1}), \delta(Z(\fh_{2m+1}))=0\}}^{\mathfrak{r}(\der \fh_{2m+1})}
    \end{multline*}
Here $\mathfrak{sp}(V,b_z)$ is the set of $b_z$-skew linear maps of the vector space $V$. And $\widehat{id}$ means $\widehat{id}|_V=id_V$ and $\widehat{id}|_{Z(\fh_{2m+1})}=2id_{Z(\fh_{2n+1})}$. So, the Levi subalgebra $\fs$ of $\der \fh_{2m+1}$ is a simple symplectic Lie algebra. And its solvable radical is a $(2m+1)$-dimensional Lie algebra with abelian nilpotent radical $\mathfrak{n}(\der \fh_{2m+1})=\{\delta: \delta(V) \subseteq \ku \cdot z, \delta(z)=0\}$. In matrix form the general shape of a derivation in an ordered standard basis as described in \eqref{eq:estandar-hm} is 

\begin{equation}\label{eq:der-heisenberg}
 \left(
		\begin{array}{c|c|c}
				
				 M+\alpha I_m& P & \mathbf{0}\\\hline
				Q & -M^t+\alpha I_m& \mathbf{0}\\\hline
				\mathbf{c}_1^t & \mathbf{c}_2^t & 2\alpha\\
		\end{array}
	\right),   
\end{equation}
where $\alpha\in \ku$, $\mathbf{c}_i$ are column matrices, $M,P$ and $Q$ are $m\times m$ matrices $P^t=P$ and $Q^t=Q$.

Returning to the real field and 
applying the spectral theorem, for any real $\varphi$-skew and invertible map of an euclidean space, $\delta\colon(V_{2m},\varphi)\to (V_{2m}, \varphi)$, there is an orthonormal basis $\{e_1, \dots, e_{2m}\}$ such that $\delta(e_{2i-1})=\lambda_ie_{2i}$ and $\delta(e_{2i})=-\lambda_i e_{2i-1}$ and $\lambda_1, \dots, \lambda_{2m}$ are positive real numbers (w.l.o.g. we can assume $\lambda_i\leq \lambda_{i+1}$). So any oscillator $\mathbb{R}$-algebra of dimension $2m+2$ is determined by an $m$-fold $\lambda=(\lambda_1, \dots, \lambda_m)$ of positive scalars such that $0<\lambda_1 \leq \dots \leq \lambda_m$ (to shorten $\fd_{2m+2}^\lambda(\mathbb{R})$ for a fixed $m$-fold $\lambda$),
    \begin{equation*}
        \fd_{2m+2}(\lambda_1, \dots, \lambda_m)=\mathbb{R} \cdot \delta_\lambda \oplus V_{2m}\oplus \delta_\lambda^*.
    \end{equation*}

 Applying Proposition \ref{prop:pattern-oscillator}, the structure constants respect to the basis $\delta_\lambda, e_1, \dots, e_{2m}, \delta^*_\lambda$ are determined by the entries of $\lambda$. Using $\varphi (e_i,e_j)=\delta_{ij}$ and $[u,v]=\varphi(\delta_\lambda(u), v)\delta^*_\lambda$ for all $u,v\in V$, we have

 \begin{equation}\label{eq:bracket-real-oscillator}
	\begin{cases}
		[e_{2i-1},e_{2i}]= -[e_{2i},e_{2i-1}]=\lambda_i \delta^*_\lambda,\\
		[e_p,e_q]= 0 \text{ if } (p,q)\neq(2i-i,2i),(2i, 2i-1),\\
		[\delta_\lambda, e_{2i-1}]=-[e_{2i-1},\delta_\lambda]=\delta_\lambda(e_{2i-1})=\lambda_ie_{2i},\\
        [\delta_\lambda, e_{2i}]=-[e_{2i},\delta_\lambda]=\delta_\lambda( e_{2i})=-\lambda_ie_{2i-1},\\
        [\fd_{2m+2},\delta^*_\lambda] = 0.
	\end{cases}
\end{equation}
From the basis-bracket description of $\fd_{2m+2}^\lambda(\mathbb{R})$, the next lemma restates \cite[Proposition II.3.14]{hilgert1989lie}.

\begin{lemma}
 Let $\lambda=(\lambda_1,\dots,\lambda_m)$ and $\fd_{2m+2}^\lambda(\mathbb{R})=\mathbb{R}\cdot \delta_\lambda\oplus \spa\langle e_1, \dots, e_{2m}\rangle\oplus \mathbb{R}\cdot \delta_\lambda^*$ the oscillator algebra with Lie bracket given in equations~\eqref{eq:bracket-real-oscillator}. For any $t \in \ku$ and $s\neq 0$ the symmetric bilinear form $\varphi_{t,s}$ given as the orthogonal sum $\spa \langle e_1, \dots, e_{2m} \rangle\perp \spa \langle \delta_\lambda,\delta_\lambda^*\rangle$, $\varphi_{t,s}(\delta_\lambda,\delta_\lambda)=t$, $\delta_\lambda$ and $\delta_\lambda^*$ isotropic, $\{e_1, \dots, e_{2m}\}$ orthogonal family and $\varphi_{t,s}(\delta_\lambda,\delta_\lambda^*)=s=\varphi_{t,s}(e_i,e_i)$ is invariant and nondegenerate. The set $\{\varphi_{t,s}: t, s \in \ku, s\neq 0 \}$ is the whole set of symmetric invariant and nondegenerate bilinear forms of $\fd_{2m+2}^\lambda$. In particular, $B_{inv}(\fd_{2m+2}^\lambda)=\spa \langle \varphi_{0,1}, \varphi_{1,1}\rangle$ and the metric dimension of real oscillator algebras is two.
\end{lemma}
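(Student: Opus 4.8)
The plan is to argue in three stages. Stage~1: show that for every $t\in\mathbb{R}$ and every $s\neq 0$ the form $\varphi_{t,s}$ is symmetric, invariant and nondegenerate. Stage~2 (the core): show conversely that every symmetric invariant bilinear form $\psi$ on $\fd_{2m+2}^\lambda(\mathbb{R})$ coincides with $\varphi_{t,s}$ for the scalars $t=\psi(\delta_\lambda,\delta_\lambda)$ and $s=\psi(\delta_\lambda,\delta_\lambda^*)$, so that the nondegenerate ones are precisely those with $s\neq 0$. Stage~3: combine these with Proposition~\ref{prop:inv-sym-mdim} to identify $B_{inv}(\fd_{2m+2}^\lambda)$ and compute the metric dimension.

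For Stage~1 I would simply evaluate the invariance identity $\varphi_{t,s}([x,y],z)+\varphi_{t,s}(y,[x,z])=0$ on the ordered basis $\delta_\lambda,e_1,\dots,e_{2m},\delta_\lambda^*$ using the brackets~\eqref{eq:bracket-real-oscillator}; by trilinearity only finitely many triples must be checked, and the nontrivial cases (one $\delta_\lambda$ together with two vectors $e_{2i-1},e_{2i}$ of the same Heisenberg block, or two such vectors together with $\delta_\lambda$) collapse to identities of the form $\lambda_i s=\lambda_i s$ and $\lambda_i\varphi_{t,s}(e_{2i},e_{2i})=\lambda_i s$. Symmetry is built into the definition. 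For nondegeneracy, the Gram matrix of $\varphi_{t,s}$ in that basis is block–diagonal with a $2m\times 2m$ block $s\,I_{2m}$ and a $2\times2$ block on $\spa\langle\delta_\lambda,\delta_\lambda^*\rangle$ with diagonal $(t,0)$ and off-diagonal entry $s$, so its determinant equals $-s^{2m+2}$, which vanishes exactly when $s=0$.

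Stage~2 is where the real work lies. Let $\psi$ be an arbitrary symmetric invariant form. (i) Since $\delta_\lambda^*$ spans the centre $Z(\fd_{2m+2}^\lambda)$, one has $[\,\cdot\,,\delta_\lambda^*]=0$, so invariance gives $\psi([a,b],\delta_\lambda^*)=0$ for all $a,b$; as $\fd_{2m+2}^2=V_{2m}\oplus\mathbb{R}\delta_\lambda^*$ by Proposition~\ref{prop:pattern-oscillator}, this forces $\psi(e_i,\delta_\lambda^*)=\psi(\delta_\lambda^*,\delta_\lambda^*)=0$. (ii) From $[\delta_\lambda,\delta_\lambda]=0$ and the fact that each $e_i$ is a nonzero multiple of some $[\delta_\lambda,e_j]$ (because $\ad\delta_\lambda$ is invertible on $V_{2m}$), invariance on $(\delta_\lambda,\delta_\lambda,e_j)$ yields $\psi(\delta_\lambda,e_i)=0$. (iii) If $e_a,e_b$ lie in different Heisenberg blocks then $[e_a,e_b]=0$, and invariance on the triple $(e_a,e_b,\delta_\lambda)$ gives $\psi(e_b,[e_a,\delta_\lambda])=0$; since $[e_a,\delta_\lambda]$ is a nonzero multiple of the partner of $e_a$ in its own block, letting $a$ range over that block shows $\psi$ vanishes on every pair of $e$'s from distinct blocks. (iv) Inside a fixed block, invariance on $(\delta_\lambda,e_{2i-1},e_{2i-1})$ gives $2\lambda_i\psi(e_{2i-1},e_{2i})=0$, hence $\psi(e_{2i-1},e_{2i})=0$, while invariance on $(e_{2i-1},e_{2i},\delta_\lambda)$ and on $(e_{2i},e_{2i-1},\delta_\lambda)$, using $[e_{2i-1},e_{2i}]=\lambda_i\delta_\lambda^*$, gives $\psi(e_{2i-1},e_{2i-1})=\psi(e_{2i},e_{2i})=s$. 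Collecting (i)--(iv), $\psi$ agrees with $\varphi_{t,s}$ on every pair of basis vectors, so $\psi=\varphi_{t,s}$; if moreover $\psi$ is nondegenerate, the determinant $-s^{2m+2}$ from Stage~1 forces $s\neq 0$. I expect step~(iii) to be the only delicate point: one might fear that coincidences among the $\lambda_i$ produce extra cross-block invariant forms (they do enlarge the obvious symmetry group of the bracket), but the relation coming from $\psi([e_a,e_b],\delta_\lambda)+\psi(e_b,[e_a,\delta_\lambda])=0$ with $e_a,e_b$ in different blocks---where $[e_a,e_b]=0$ regardless of the $\lambda_i$---kills all of them uniformly.

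Finally, for Stage~3: Stages~1 and~2 together say that every symmetric invariant form is $\varphi_{t,s}=(s-t)\varphi_{0,1}+t\varphi_{1,1}$, so $B_{inv}^s(\fd_{2m+2}^\lambda)\subseteq\spa\langle\varphi_{0,1},\varphi_{1,1}\rangle$; conversely $\varphi_{0,1}$ and $\varphi_{1,1}$ are themselves invariant (Stage~1) and clearly independent, so $B_{inv}^s(\fd_{2m+2}^\lambda)=\spa\langle\varphi_{0,1},\varphi_{1,1}\rangle$, a two-dimensional space. Since $\mathbb{R}$ is infinite, Proposition~\ref{prop:inv-sym-mdim} applies and $B_{inv}(\fd_{2m+2}^\lambda)$ is linearly generated by its nondegenerate symmetric invariant forms---which are the $\varphi_{t,s}$ with $s\neq 0$---hence $B_{inv}(\fd_{2m+2}^\lambda)=\spa\langle\varphi_{0,1},\varphi_{1,1}\rangle$ and $m(\fd_{2m+2}^\lambda)=2$. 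Everything outside step~(iii) is routine bookkeeping with~\eqref{eq:bracket-real-oscillator}.
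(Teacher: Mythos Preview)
Your proof is correct and follows essentially the same route as the paper: verify invariance of $\varphi_{t,s}$ directly on basis triples, then use the invariance relations on well-chosen triples (involving $\delta_\lambda$ and pairs of $e_i$'s) together with the orthogonality $(\fd_{2m+2}^2)^\perp=Z(\fd_{2m+2})$ to pin down every value of an arbitrary symmetric invariant form, and finish via the linear relation $\varphi_{t,s}=(s-t)\varphi_{0,1}+t\varphi_{1,1}$ and Proposition~\ref{prop:inv-sym-mdim}. The only cosmetic differences are that you classify \emph{all} symmetric invariant forms (not just the nondegenerate ones) before invoking Proposition~\ref{prop:inv-sym-mdim}, and your determinant $-s^{2m+2}$ carries the correct sign.
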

\begin{proof}
    Note that $\determ \varphi_{t,s}=s^{2m+2}$, so  $\varphi_{t,s}$ is nondegenerate if and only if $s\neq 0$. The invariance of $\varphi_{t,s}$ is equivalently to 
    \begin{equation*}
        \varphi_{t,s}([x,a],b)+\varphi_{t,s}(a,[x,b])=0 \quad \forall x,a,b\in \fd_{2m+2}.
    \end{equation*}
    The equality follows by checking it for $x\in \{\delta_\lambda, e_i\}$ (only  $\varphi_{0,1}$ and $\varphi_{1,1}$ need to be checked). Now let $b$ an arbitrary invariant symmetric and nondegenerate form. From Proposition \ref{prop:ideals} $(\fd_{2m+2}^2)^\perp=Z(\fd_{2m+2})=\ku \delta_\lambda^*$ and  $b(\delta_\lambda,\delta_\lambda^*)=s_0 \neq 0$ because $b$ is non-degenerate. We also set $b(\delta_\lambda,\delta_\lambda)=t_0$. Since $\delta_\lambda(e_{2i-1})=\lambda_ie_{2i}$ and $\delta_\lambda(e_{2i})=-\lambda_ie_{2i-1}$, we get
    \begin{equation*}
        b(\delta_\lambda, e_{2i})=\frac{1}{\lambda_i}b(\delta_\lambda, \delta_\lambda(e_{2i-1}))=b(\delta_\lambda, [\delta_\lambda,e_{2i-1}])=0
    \end{equation*}
    by using that $b$ is invariant. So $b(\delta_\lambda, e_{2i})=0$ and $b(\delta_\lambda, e_{2i-1})=0$ in the same vein. Finally, from \eqref{eq:bracket-real-oscillator}, $b(\delta_\lambda, [e_{2i-1},e_{2j}])=\delta_{ij}\lambda_i s_0$ and by invariance: 
    \begin{equation*}
	\begin{cases}
		b(\delta_\lambda, [e_{2i-1},e_{2j}])=b(\delta_\lambda(e_{2i-1}),e_{2j})=\lambda_ib(e_{2i}, e_{2j}),\\
		b(\delta_\lambda, [e_{2i-1},e_{2j}])=-b(\delta_\lambda(e_{2j}),e_{2i-1})=\lambda_jb(e_{2j-1}, e_{2i-1}).
	\end{cases}
\end{equation*}
    Therefore, $b(e_{2i}, e_{2j})=b(e_{2i-1}, e_{2j-1})=\delta_{ij}s_0$. A similar reasoning yields
    \begin{equation*}
        b(e_{2i}, e_{2j-1})=\frac{1}{\lambda_i}b(\delta_\lambda(e_{2i-1}), e_{2j-1})={\lambda_i}b(\delta_\lambda,[e_{2i-1}), e_{2j-1}])=0,
    \end{equation*}
    so $b=\varphi_{t_0,s_0}$. The final assertion follows from Proposition~\ref{prop:inv-sym-mdim} and the linearly depending relation $\varphi_{t_0,s_0}=(s_0-t_0)\varphi_{0,1}+ t_0\varphi_{1,1}$.
\end{proof}

There are three classes of metric real oscillator algebras depending on $\lambda=(\lambda_1, \dots, \lambda_m)$ (see \cite[Section 4]{medina1985algebres}):
\begin{itemize}
    \item [] $\mathcal{O}$-I: all the entries of $\lambda$ are different. Then, the set of $\varphi_{0,1}$-skew derivations is an abelian Lie algebra.
    
    \item [] $\mathcal{O}$-II: all the entries of $\lambda$ are equals to $\lambda_1$. Up to isomorphisms for any $m\geq 1$ we have the series $\fd_{2m+2}(1,\dots, 1)$. Since  $\lambda=(\lambda_1, \dots, \lambda_1)$, rescaling the basis of $\fd_{2m+2}^\lambda(\mathbb{R})$ in the form $\frac{1}{\lambda_1}\delta, e_1,\dots, e_{2m}, \lambda_1 \delta_\lambda^*$  we arrive at $\fd_{2m+2}(1,\dots, 1)$. The set of $\varphi_{0,1}$-skew derivations is the special unitary Lie algebra $\fsu_m(\mathbb{R})$, a simple Lie algebra of type $A$ (i.e. the complex extension $\fsu_m(\mathbb{R})\otimes \mathbb{C}$ is $\fsl_m(\mathbb{C})$).
    \item [] $\mathcal{O}$-III: there are at least two different entries $\lambda_i<\lambda_{i+k}$ and one of them of multiplicity $\geq 2$. The set of $\varphi_{0,1}$-skew derivations is a reductive non-abelian Lie algebra. 
\end{itemize}
\noindent To end this section, we will compute explicitely the whole sets of derivations and $\varphi_{0,1}$-skew derivations of  $\fd_{2m+2}(1,\dots, 1)$ (for short $\fd_{2m+2}$). In the sequel, we fixed a natural $m\geq 1$ and, in order to get a more symmetric block description of any derivation, all the self-linear maps of $\fd_{2m+2}$ will be given in a matrix level with respect to the ordered basis \[\{\delta_\lambda, x_1, \dots, x_m, y_1, \dots,y_m, \delta_\lambda^*=z\}\] with $x_i=e_{2i-1}$ and $y_i=e_{2i}$. In this way, $\{x_i,y_i, \delta_\lambda^*\}$ forms a standard basis of $\fh_{2m+1}$ as in expression~\eqref{eq:estandar-hm}. For the rest of products we observe equations~\eqref{eq:bracket-real-oscillator} with $\lambda_i=1$. Let $D$ be any derivation of $\fd_{2m+2}$. From 
\begin{equation*}
    D(\delta_\lambda)=\gamma \delta_\lambda+ \sum_{i=1}^m b_i x_i+ \sum_{i=1}^m c_i y_i +\beta z,
\end{equation*}
items (a), (b) and (c) in Proposition \ref{prop:der-oscillator} and matrix description in \eqref{eq:der-heisenberg}, we arrive at the general matrix description:

\begin{equation}\label{eq:der-diamond-general}
 D=\left(
		\begin{array}{c|cc|c}
				0&\multicolumn{1}{c|}{\mathbf{0}} & \mathbf{0}& 0 \\\hline
				\multicolumn{1}{c|}{\mathbf{b}} & \multicolumn{1}{c|}{M+\alpha I_m} & P & \mathbf{0}\\\hline
				\mathbf{c} & \multicolumn{1}{c|}{-P} & -M^t+\alpha I_m& \mathbf{0}\\\hline
				\beta & \multicolumn{1}{c|}{-\mathbf{b}^t} & -\mathbf{c^t}&2\alpha\\
		\end{array}
	\right),   
\end{equation}
with $\beta, \alpha \in \mathbb{R}$, $\mathbf{b}, \mathbf{c}$ $1\times m$ matrices and $M^t=-M$ and $P^t=P$.
The set of inner derivations $\inner \fd_{2m+2}=\spa \langle \ad \delta_\lambda, \ad x_i, \ad y_i : 1\leq i\leq m\rangle$ is just the set of matrices as in equation~\eqref{eq:der-diamond-general} with $M=0$, $\beta=\alpha=0$ and $P=\mu I_m$. Using $\ad[x,y]=[\ad x, \ad y]$, we get the derived subalgebra of this algebra, $(\inner \fd_{2m+2})^2=\spa \langle \ad x_i, \ad y_i : 1\leq i\leq m\rangle$, which is clearly abelian.

Among the derivations of $(\fd_{2m+2}, \varphi_{0,1})$, we look for the $\varphi_{0,1}$-skew ones:
\begin{equation}\label{eq:skew-der-diamond}
    D\in \der_{\varphi_{0,1}} \fd_{2m+2}\Longleftrightarrow \varphi_{0,1}(D(x),y)+ \varphi_{0,1}(x,D(y))=0.
\end{equation}
\noindent From \eqref{eq:skew-der-diamond}, the skew-derivations are as in \eqref{eq:der-diamond-general} with $\beta=\alpha=0$. Hence, any derivation $D$ decomposes into the generic sum of basic blocks of derivations:
\begin{multline*}
\footnotesize{
    D=\overbrace{\left(
			\begin{array}{c|cc|c}
				0&\multicolumn{1}{c|}{\mathbf{0}} & \mathbf{0}& 0 \\\hline
				\multicolumn{1}{c|}{\mathbf{0}} & \multicolumn{1}{c|}{\alpha I_m} & \mathbf{0} & \mathbf{0}\\\hline
				\mathbf{0} & \multicolumn{1}{c|}{\mathbf{0}} & {\alpha I_m}& \mathbf{0}\\\hline
				0 & \multicolumn{1}{c|}{\mathbf{0}} & \mathbf{0}& 2\alpha\\
		\end{array}
		\right)}^{\displaystyle\alpha D_{0,1,2}}+
		\underbrace{\overbrace{\left(
		\begin{array}{c|cc|c}
				0&\multicolumn{1}{c|}{\mathbf{0}} & \mathbf{0}& 0 \\\hline
				\multicolumn{1}{c|}{\mathbf{0}} & \multicolumn{1}{c|}{M} & \mathbf{0} & \mathbf{0}\\\hline
				\mathbf{0} & \multicolumn{1}{c|}{\mathbf{0}} & M& \mathbf{0}\\\hline
				0 & \multicolumn{1}{c|}{\mathbf{0}} & \mathbf{0}&0\\
		\end{array}
		\right)}^{\displaystyle\fs_0}+
		\overbrace{\left(
		\begin{array}{c|cc|c}
				0&\multicolumn{1}{c|}{\mathbf{0}} & \mathbf{0}& 0 \\\hline
				\multicolumn{1}{c|}{\mathbf{0}} & \multicolumn{1}{c|}{\mathbf{0}} & P_0 & \mathbf{0}\\\hline
				\mathbf{0} & \multicolumn{1}{c|}{-P_0} & \mathbf{0} & \mathbf{0}\\\hline
				0 & \multicolumn{1}{c|}{\mathbf{0}} & \mathbf{0}&0\\
		\end{array}
		\right)}^{\displaystyle \in\fs_1}
	     }_{\displaystyle \fs^2=[\fs,\fs]_\fs}+
	     }\\
\footnotesize{
		\underbrace{\overbrace{\left(
			\begin{array}{c|cc|c}
				0&\multicolumn{1}{c|}{\mathbf{0}} & \mathbf{0}& 0 \\\hline
				\multicolumn{1}{c|}{\mathbf{0}} & \multicolumn{1}{c|}{\mathbf{0}} & \mu I_m & \mathbf{0}\\\hline
				\mathbf{0} & \multicolumn{1}{c|}{-\mu I_m} & \mathbf{0}& \mathbf{0}\\\hline
				0 & \multicolumn{1}{c|}{\mathbf{0}} & \mathbf{0}& 0\\
		\end{array}
		\right)}^{\displaystyle \mu\ad \delta_\lambda\in \fs_1}+
		\overbrace{\left(
			\begin{array}{c|cc|c}
				0&\multicolumn{1}{c|}{\mathbf{0}} & \mathbf{0}& 0 \\\hline
				\multicolumn{1}{c|}{\mathbf{b_1}} & \multicolumn{1}{c|}{\mathbf{0}} & \mathbf{0} & \mathbf{0}\\\hline
				\mathbf{b_2} & \multicolumn{1}{c|}{\mathbf{0}} & \mathbf{0}& \mathbf{0}\\\hline
				0 & \multicolumn{1}{c|}{\mathbf{-b_1}^t} & \mathbf{-b_2^t}& 0\\
		\end{array}
		\right)
		}^{\displaystyle\mathfrak{t}=(\inner \fd_{2m+2})^2}
		}_{\displaystyle\inner \fd_{2m+2}}+
		\overbrace{\left(
			\begin{array}{c|cc|c}
				0&\multicolumn{1}{c|}{\mathbf{0}} & \mathbf{0}& 0 \\\hline
				\multicolumn{1}{c|}{\mathbf{0}} & \multicolumn{1}{c|}{\mathbf{0}} & \mathbf{0} & \mathbf{0}\\\hline
				\mathbf{0} & \multicolumn{1}{c|}{\mathbf{0}} & \mathbf{\mathbf{0}}& \mathbf{0}\\\hline
				\beta & \multicolumn{1}{c|}{\mathbf{0}} & \mathbf{0}& 0\\
		\end{array}
		\right)}^{\displaystyle\beta D_{1,0,0}},%
	     }
\end{multline*}
where $M^t=-M$ and $P_0^t=P_0$ are $m\times m$ traceless matrices. Let denote $\fs$ the set of $2m\times 2m$ matrices of the following shape:
\begin{equation}\label{eq:jordan-triple}
        \left(\begin{array}{c|c} M& P\\ \hline -P& M \\\end{array}\right)=\left(\begin{array}{c|c} M& \mathbf{0}\\ \hline \mathbf{0}& M \\\end{array}\right)\oplus\left(\begin{array}{c|c} \mathbf{0}& P_0\\ \hline -P_0& \mathbf{0} \\\end{array}\right)\oplus\left(\begin{array}{c|c} \mathbf{0}& \mu I_m\\ \hline -\mu I_m& \mathbf{0} \\\end{array}\right),
    \end{equation}
here $P=P_0+\mu I_m$ and $\mu=\frac{\tr P}{m}$. It is easily checked that $\fs$ is a vector space  which is closed under the bracket $[x,y]_{\fs}=xy-yx$. Then, $\fs$ is a linear Lie subalgebra of the special linear algebra $\fsl_{2m}(\mathbb{R})$ and the direct sum decomposition given in \eqref{eq:jordan-triple} provides a $\mathbb{Z}_2$-graded decomposition $\fs=\fs_0\oplus \fs_1$. So, the even part $\fs_0$ is a Lie algebra, in this case isomorphic to the simple orthogonal algebra of skew symmetric matrices $\fso_m(\mathbb{R})$ if $m\geq 3$.

\begin{theorem}
Let $\fd_{2m+2}^\lambda(\mathbb{R})=\mathbb{R} \cdot \delta_\lambda \oplus (V_{2m}, \varphi_{0,1})\oplus \mathbb{R} \cdot \delta_\lambda^*$, $m\geq 2$, be the real oscillator Lie algebra of $m$-fold $\lambda=(1,\dots, 1)$. The sets of derivations and skew-symmetric derivations can be described as follows:
    \begin{itemize}
        \item [(a)] $\der \fd_{2m+2}=\mathbb{R} \cdot D_{1,0,0} \oplus\mathbb{R} \cdot  D_{0,1,2}\oplus [\fs,\fs]_\fs\oplus \inner \fd_{2m+2}$ where $D_{0,1,2}$ is the derivation given by $D_{0,1,2}(\delta_\lambda)=0$, $D_{0,1,2}(v)=v$ for all $v\in V$ and $D_{0,1,2}(z)=2z$ and $D_{1,0,0}(\delta_\lambda)=z$ and $D_{1,0,0}(\fd_{2m+2}^2)=0$.
        
        \item [(b)] $\der_{\varphi_{0,1}} \fd_{2m+2}=\mathfrak{s}\oplus [\inner \fd_{2m+2},\inner \fd_{2m+2}]$, and $\mathfrak{s}^2$ is isomorphic to the special unitary simple Lie algebra $\fsu_m(\mathbb{R})$. For $m\geq 3$, $\fs_0$ is the orthogonal simple algebra of $m\times m$ skew-matrices.
        \end{itemize}
        For $m=1$, $\der \fd_4=\mathbb{R} \cdot D_{0,1,2}\oplus\mathbb{R} \cdot D_{1,0,0} \oplus \inner \fd_4$ and $\der_{\varphi_{0,1}} \fd_4=\inner \fd_{4}$.
\end{theorem}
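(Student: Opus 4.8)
The plan is to read the statement off the general block description \eqref{eq:der-diamond-general} of a derivation $D$ of $\fd_{2m+2}=\fd_{2m+2}^\lambda(\mathbb{R})$ obtained just above, together with one structural identification of $\fs$. By Proposition~\ref{prop:der-oscillator} and the shape \eqref{eq:der-heisenberg} of $\der\fh_{2m+1}$, every derivation is of the form \eqref{eq:der-diamond-general}, with mutually independent data $\alpha,\beta\in\mathbb{R}$, row matrices $\mathbf{b},\mathbf{c}$, a skew-symmetric $M$ and a symmetric $P$. Writing $P=P_0+\mu I_m$ with $\mu=\frac{1}{m}\tr P$ and $P_0$ symmetric traceless, I would match each parameter block to a summand of the claimed decomposition: $\alpha$ to $\mathbb{R}\cdot D_{0,1,2}$, $\beta$ to $\mathbb{R}\cdot D_{1,0,0}$, the pair $(M,P_0)$ to $[\fs,\fs]_\fs$, the scalar $\mu$ to $\ad\delta_\lambda$, and $(\mathbf{b},\mathbf{c})$ to $(\inner\fd_{2m+2})^2$. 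Since these parameters are free and jointly exhaust \eqref{eq:der-diamond-general}, and since $\mathbb{R}D_{1,0,0}$, $\mathbb{R}D_{0,1,2}$, $[\fs,\fs]_\fs$ and $\inner\fd_{2m+2}=\mathbb{R}\ad\delta_\lambda\oplus(\inner\fd_{2m+2})^2$ are subalgebras of $\fgl(\fd_{2m+2})$ contained in $\der\fd_{2m+2}$ (closedness of $\fs$ under brackets was checked before the statement, and $D_{1,0,0},D_{0,1,2}$ are derivations either by a one-line verification or directly from \eqref{eq:der-diamond-general}), part (a) follows as a vector space direct sum; the only extra point is $[\fs,\fs]_\fs\cap\inner\fd_{2m+2}=0$, immediate from disjointness of the parameter blocks $(M,P_0)$ and $(\mu,\mathbf{b},\mathbf{c})$.

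For part (b) I would prove two inclusions. For ``$\supseteq$'': inner derivations are $\varphi$-skew for every invariant form $\varphi$, so $\inner\fd_{2m+2}\subseteq\der_{\varphi_{0,1}}\fd_{2m+2}$; and a matrix of the shape \eqref{eq:jordan-triple} is genuinely skew-symmetric (its transpose is its negative), hence skew for the restriction of $\varphi_{0,1}$ to $V_{2m}$ --- which is the standard dot product because $\varphi_{0,1}(e_i,e_j)=\delta_{ij}$ --- while it annihilates $\delta_\lambda$ and $z=\delta_\lambda^*$; in view of the orthogonal splitting $\spa\langle\delta_\lambda,z\rangle\perp V_{2m}$ this makes it a $\varphi_{0,1}$-skew derivation, so $\fs\subseteq\der_{\varphi_{0,1}}\fd_{2m+2}$ and hence $\fs\oplus(\inner\fd_{2m+2})^2\subseteq\der_{\varphi_{0,1}}\fd_{2m+2}$. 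For ``$\subseteq$'': given $D\in\der_{\varphi_{0,1}}\fd_{2m+2}$ written as in \eqref{eq:der-diamond-general}, substituting $x=y=\delta_\lambda$ in \eqref{eq:skew-der-diamond} gives $\beta=0$, and substituting $x=z$, $y=\delta_\lambda$ gives $\alpha=0$, so $D\in\fs\oplus(\inner\fd_{2m+2})^2$. Therefore $\der_{\varphi_{0,1}}\fd_{2m+2}=\fs\oplus(\inner\fd_{2m+2})^2=\fs\oplus[\inner\fd_{2m+2},\inner\fd_{2m+2}]$.

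The genuinely structural step is the isomorphism type of $\fs^2$. Here I would use the map sending a matrix of the form \eqref{eq:jordan-triple} to the complex matrix $M-iP\in\fgl_m(\mathbb{C})$: this is the inverse of the realification $\fgl_m(\mathbb{C})\hookrightarrow\fgl_{2m}(\mathbb{R})$ associated with $\mathbb{C}^m\cong\mathbb{R}^{2m}$, hence a homomorphism of real Lie algebras, and the conditions $M^t=-M$, $P^t=P$ say exactly that $M-iP$ is skew-Hermitian; thus $\fs\cong\mathfrak{u}_m$, the real Lie algebra of skew-Hermitian $m\times m$ complex matrices, and $\fs^2=[\fs,\fs]\cong\fsu_m(\mathbb{R})$, the compact real form of $\fsl_m(\mathbb{C})$, which is simple for $m\geq2$. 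The even part $\fs_0$ --- the matrices of the form \eqref{eq:jordan-triple} with $P=0$ --- is $\fso_m(\mathbb{R})$, simple for $m\geq3$, which completes (b). For $m=1$, both $M$ and $P_0$ are forced to vanish (a skew or a traceless $1\times1$ real matrix is zero), so $\fs^2=0$ and $\fs=\mathbb{R}\ad\delta_\lambda$; the decompositions in (a) and (b) then collapse exactly to $\der\fd_4=\mathbb{R}D_{0,1,2}\oplus\mathbb{R}D_{1,0,0}\oplus\inner\fd_4$ and $\der_{\varphi_{0,1}}\fd_4=\mathbb{R}\ad\delta_\lambda\oplus(\inner\fd_4)^2=\inner\fd_4$.

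I do not expect a serious obstacle, since the heavy lifting --- Proposition~\ref{prop:der-oscillator}, the Heisenberg derivation shape \eqref{eq:der-heisenberg}, and the reduction to \eqref{eq:der-diamond-general} --- is already in place; the remaining work is essentially the bookkeeping of the direct sums. The points needing the most care are the realification identification yielding $\fs^2\cong\fsu_m(\mathbb{R})$, and the low-dimensional degeneracies: $\fs^2=0$ for $m=1$, and $\fs_0\cong\fso_2(\mathbb{R})$ being abelian rather than simple for $m=2$, which is precisely why the simplicity statements are restricted to $m\geq2$ and $m\geq3$ respectively.
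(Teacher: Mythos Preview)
Your proposal is correct and follows essentially the same route as the paper: both read parts (a) and (b) directly off the block decomposition \eqref{eq:der-diamond-general} established just before the statement (the paper's proof opens with ``the result follows from previous matrix decompositions and discussion''), and both identify $\fs^2\cong\fsu_m(\mathbb{R})$ via the realification map sending the block matrix with data $(M,P_0)$ to the skew-Hermitian matrix $M-iP_0$. Your write-up is simply more explicit about the bookkeeping --- in particular the substitutions yielding $\alpha=\beta=0$ for skew derivations and the handling of the $m=1$ degeneracy --- which the paper absorbs into the preceding discussion.
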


\begin{proof}
  The result follows from previous matrix decompositions and discussion. Since the special unitary real Lie algebra can be realized as the vector space of traceless skew-Hermitinan $m\times m$ matrices, so $\fsu_m(\mathbb{R})=\{M+iP:M^t=-M, P^t=P, \mathrm{tr}\, P=0\}$ and $\fso_m(\mathbb{R})=\{M:M^t=-M\}$ is a subalgebra. It is easily checked that the map $M+P_0\mapsto M-iP_0$ (here $M+P_0$ represents the two first summands in the decomposition \eqref{eq:jordan-triple}) is a Lie isomorphism from $\fs^2$ to $\fsu_m(\mathbb{R})$. The same map  proves that $\fs_0\cong \fso_m(\mathbb{R})$.    
 \end{proof}

\begin{remark}
Let $J=H(M_m(\mathbb{R},t))$ be the real simple unitary Jordan algebra of $m\times m$ symmetric matrices for $m\geq 2$. Up to isomorphisms, $\fs_0$ and $\fs=\mathbb{R} \cdot \ad \delta_\lambda\oplus [\fs,\fs]_\fs$ are just the algebra of derivations of $J$ and  the Lie multiplication algebra of $J$ according to \cite[Chapter VI, Section 9, Theorems 9 and 11]{jacobson1968structure}. And the $\mathbb{Z}_2$-graded decomposition $\fs^2=\fs_0\oplus \fs_1\cap \fs^2$ is related to the compact symmetric space $SU(m)/SO(m)$ (see \cite[Table V, page 518]{helgason1979differential}). We also point out that Jordan algebras were introduced by Pascual Jordan in 1933 to formalize the notion of an algebra of observables in quantum mechanics. The algebra $\fd_4(\mathbb{R})$ is the algebra of the observables of the quantum mechanical model of the harmonic oscillator.
\end{remark}
The existence of $\fs^2\cong \fsu_m(\mathbb{R})$ and $\fs_0\cong \fso_m(\mathbb{R})$ as simple subalgebras of $\der_{\varphi_{0,1}} \fd_{2m+2}$ for $m\geq 2$ and $m\geq 3$ lets us construct, in parallel with the quadratic solvable series $(\fd_{2m+2}, \varphi_{0,1})$, the series of mixed quadratic algebras,
    \begin{equation*}
        (\fd_{2m+2})_{\fsu_m(\mathbb{R})}:= \fsu_m(\mathbb{R}) \oplus \fd_{2m+2} \oplus \fsu_m(\mathbb{R})^*, \text{ and }
    \end{equation*}
    \begin{equation*}
        (\fd_{2m+2})_{\fso_m(\mathbb{R})}:= \fso_m(\mathbb{R}) \oplus \fd_{2m+2} \oplus \fso_m(\mathbb{R})^*,
    \end{equation*}
by following Theorem \ref{thm:doubleExtension}. In this case, we extend from the natural inclusion of $\fs$ into the set of skew-derivations, so $\fb=\fs^2, \fs_0$ and $\phi=\iota$. 

Since $\der_{\varphi_{0,1}} \fd_4=\inner \fd_{4}$, double extensions of the oscillator algebra $\fd_4(\mathbb{R})$ by means of its skew-derivations only produce decomposable metric algebras that are orthogonal sums $\fd_4\oplus \fa$,  with $\fa$  metric abelian (see  \cite[Lemma 2.8]{benito2023equivalent}).

\section*{Funding}
The authors have been supported by research grant MTM2017-83506-C2-1-P of `Ministerio de Econom\'ia, Industria y Competitividad, Gobierno de Espa\~na' (Spain) until 2022 and by grant PID2021-123461NB-C21, funded by MCIN/AEI/10.13039/501100011033 and by “ERDF A way of making Europe” since then. J. Rold\'an-L\'opez was also supported by a predoctoral research grant FPI-2018 of `Universidad de La Rioja'.

\bibliographystyle{apalike}
\bibliography{bibliography}

\end{document}